\newtheorem{theorem}{Theorem}[section]
\newtheorem{prop}[theorem]{Proposition}
\newtheorem{cor}[theorem]{Corollary}
\newtheorem{rmk}[theorem]{Remark}
\def\T{\mathbb T}
\def\N{{\mathbb N}}
\def\Q{{\mathbb Q}}
\def\R{{\mathbb R}}
\def\Z{{\mathbb Z}}
\def\la{\langle}
\def\ra{\rangle}
\def\ges{\gtrsim}
\def\les{\lesssim}
\def\1{{\bf 1}}
\def\oo{{\"o}}
\def\eqnn{\begin{eqnarray*}}
\def\eeqnn{\end{eqnarray*}}
\def\eqn{\begin{eqnarray}}
\def\eeqn{\end{eqnarray}}
\newcommand{\nc}{\newcommand}
\nc{\be}{\begin{equation}}
\nc{\ee}{\end{equation}}
\nc{\ba}{\begin{eqnarray}}
\nc{\ea}{\end{eqnarray}}
\nc{\eps}{\epsilon}
\def\prf{\begin{proof}}
\def\endprf{\end{proof}}
\begin{document}

\title[Fractal solutions of  dispersive PDE]{Fractal solutions of dispersive partial differential equations on the torus}

\author[Erdo\u{g}an and  Shakan]{M. B. Erdo\u{g}an and  G. Shakan}
\thanks{The first author is partially supported by NSF grant  DMS-1501041. The second author  was partially supported by NSF grant  DMS-1501982 and  would
like to thank Kevin Ford for financial support. }
 \thanks{The authors would like   to thank Luis Vega for useful comments on an earlier version of this manuscript and for pointing out  several important references. The authors also thank an anonymous referee for useful suggestions and references.}
\address{Department of Mathematics \\
University of Illinois \\
Urbana, IL 61801, U.S.A.}
\email{berdogan@illinois.edu}

\address{Department of Mathematics \\
University of Illinois \\
Urbana, IL 61801, U.S.A.}
\email{george.shakan@gmail.com}

\date{}

\begin{abstract}
We use exponential sums to study the fractal dimension of the graphs of solutions to linear dispersive PDE. Our techniques apply to Schr\"odinger, Airy, Boussinesq, the fractional Schr\"odinger, and the gravity and gravity--capillary water wave  equations. We also discuss applications to certain nonlinear dispersive equations. In particular,  we obtain bounds for the  dimension of the graph of the solution to cubic nonlinear Schr\"odinger   and Korteweg--de Vries equations along oblique lines in space--time.  
\end{abstract}

\maketitle

\tableofcontents

\section{Introduction}

 Consider a linear dispersive equation  of the form
\be \label{eq:linear}\left\{\begin{array}{l}
iq_t+L_\omega q=0,\,\,\,\, t\in \R, \,\,\,x\in\T:=\R/(2\pi\Z) ,\\
q(0,\cdot)=g(\cdot)\in L^2(\T). 
\end{array}\right.
\ee
Here  $L_\omega$ is a linear symmetric differential operator of the form
$$
\widehat{L_\omega q}(n)= \omega(n) \widehat{q}(n), \,\,\,\, n\in\Z,
$$
where $\omega:\Z\to \R$ is the dispersion relation, and $\widehat{q}(n)$ denotes the $n^{\text{th}}$ Fourier coefficient of $q$. The solution of \eqref{eq:linear} can be written as
\be\label{solution}
q(t,x)=e^{itL_\omega}g(x)=\sum_{n\in\Z} \widehat{g}(n) e^{it\omega(n)+inx}.
\ee
The key examples are the  linear Schr\"odinger equation, $iq_t+q_{xx}=0$, with the dispersion relation $\omega(n)=-n^2$,  and the Airy (or linear Korteweg--de Vries (KdV)) equation, $q_t+q_{xxx}=0$, with $\omega(n)=n^3$.  

In this paper, using exponential sum estimates, we  study the fractal dimension of the graphs of solutions of a large class of dispersive PDE. Various aspects of this problem have  been   considered by many authors,   e.g., \cite{berlew,O,mber, berklei,KR,R,bermar,O2,OC10,olv,chenolv1,ET1,ET2,OC13,chenolv,HV,cet,V,ETbook,olvernew,olvtsa}.  
One of our goals is to give a theoretical justification to some observations of Berry \cite{mber}, Chen--Olver \cite{chenolv1,chenolv} and   Olver--Sheils \cite{olvernew}.

The initial motivation for this question goes back to an 1836   optical experiment by  Talbot \cite{talbot}. Talbot studied monochromatic light passing through a diffraction grating and  observed that at a certain distance (the so--called Talbot distance)  the diffraction pattern reproduces the grating pattern. Moreover at each rational multiple of the Talbot distance the pattern appears to be  a finite linear combination of the grating pattern.  
 Berry and his collaborators   studied this phenomenon in a series of papers, e.g.,  \cite{mber, berklei,berlew, bermar}.
In particular, in \cite{berklei},    the linear Schr\"odinger evolution was utilized to model the Talbot effect where time represents distance to the grating. The authors proved that at rational times the solution is a linear combination of finitely many translates of the initial data with the coefficients being Gauss sums, also see \cite{mtay1,mtay2,olv,olvtsa}.   This phenomenon is often called {\it quantization} in the literature. In \cite{berklei}, the authors  also observed that the solution at irrational times has a fractal nowhere differentiable profile. 
In particular for step function initial data at rational times one observes a step function,  and a nowhere differentiable function with fractal dimension $\frac32$ at irrational times. In addition, in \cite{mber}, Berry  argued that there should be space slices whose time fractal dimension is $\frac74$ and diagonal slices with dimension $\frac54$. Finally, it was conjectured that this phenomenon should   occur even when there is a nonlinear perturbation.

The first mathematically rigorous work in this area is due to Oskolkov. In \cite[Proposition 14]{O}, he proved that for bounded variation  initial data the solution of any linear dispersive PDE on $\T$ with a polynomial dispersion relation, in particular  the linear Schr\"odinger and the  Airy equations, is a continuous function  of $x$ at irrational times.  Moreover, if in addition the initial data is continuous, then the solution is a continuous function of space and time.  In \cite{KR}, 
Kapitanski and Rodniaski  showed that the solution to the linear Schr\"odinger equation at irrational times belong  to a higher regularity Besov space than   at rational times. This   effect can not be observed  in the scale of Sobolev spaces  since the linear propagator is  unitary. In \cite{R}, using the result in \cite{KR}, Rodnianski  justified  Berry's conjecture for the linear Schr\"odinger evolution proving that for initial data in\footnote{Throughout the paper we use the notation $H^{r+}(\T)$ to denote the set of functions $\bigcup_{s>r} H^s(\T)$, and   $H^{r-}(\T):= \bigcap_{s<r} H^s(\T)$. In addition, we use $BV(\T)$ to denote the set of bounded variation functions on $\T$.} $BV(\T)\setminus H^{\frac12+}(\T)$,  the graph of the real and imaginary parts of  the solution  has fractal dimension $\frac32$ at almost every  time.  In \cite{O}, as well as in  \cite{KR,R}, the proof relies on the properties of   the following discrete Hilbert transform
\be\label{eq:H}
H(t,x)=p.v.\sum_{n\neq 0} \frac1n e^{in^2t+inx}.
\ee 
The fractal dimension claim  of \cite{R} follow from the observation that for almost every time $H(t,x)$ is   $C^{\frac12-}$ as a function of $x$.

Several studies in the literature focused on determining the fractal dimension of analogs of \eqref{eq:H}.  Riemann's proposed continuous but nowhere differentiable function, $$\phi(t) = \sum_{n \neq 0} \frac{e^{itn^2}}{n^2},$$ is essentially  the integral of the  fundamental solution to the Schr\"odinger equation, 
 $\sum_n e^{itn^2 + inx},$ 
along the vertical line $x =0$. 
In analogy  to quantization of the Talbot effect, it is well--known that $\phi$ is nondifferentiable except at certain rational values of $t/(2\pi)$  where the corresponding Gauss sum vanishes. In a remarkable paper \cite{Ja}, Jaffard established the multifractality of $\phi$ obtaining precise bounds for the Hausdorff dimension of the sets where $\phi$ is $C^\alpha$. If one instead fixes a time $t = t_0$ and integrates the fundamental solution horizontally, then one is lead to the study of $H(t_0,x)$.  The analysis of the fractal dimension of these graphs is more delicate; the only   known results are due to Oskolkov and Chakhiev \cite{OC13}. In particular,  \cite[Corollary 2]{OC13}  states that for almost every $x_0$, the function $H$  restricted to the vertical line $x=x_0$ is $C^{\frac18-}_t$, which implies the fractal dimension of the graph is at most $ \frac{15}8$ (see the proof of Corollary~\ref{cor:obliqueupper} below). They also obtained  bounds for the Hausdorff dimension of the exceptional set of times at which $H(t,x)$ fails to be $C^\alpha_x$ for a given $\alpha\in (0,\frac12)$.   One of the main goals of our work is to initiate the study of oblique lines  $k x + \ell t = c$ with $k , \ell \in \mathbb{Z},$  as was proposed in \cite{mber}, whilst using the Schr\"{o}dinger equation to model the Talbot effect.

In \cite{ChCo}, Chamizo and Cordoba considered variations of $\phi$ such as $$ \phi_{a,k}(t) := \sum_{n =1}^\infty \frac{e^{itn^k}}{n^a} , \ \ \  \ \ \frac{k+1}{2} \leq a \leq k + \frac12.$$ They proved that the fractal dimension of the graph is exactly $2+\frac{1 - 2a}{2k}$. We remark that $\phi_a(t)$ is a regularized analog of the solution of \eqref{eq:linear} with dispersion relation $\omega(n) = n^k$, along the vertical line $x = 0$, see Remark~\ref{rmk:ChCo} below for further discussion.

In \cite{ET1,ET2}, using smoothing estimates for the evolution, the first author and Tzirakis  extended Oskolkov's \cite{O} and Rodnianski's \cite{R} results to the  KdV  and the cubic nonlinear Schr\"odinger  (NLS) equations, in particular they proved that the dimension of the graph for NLS solutions is $\frac32$ under the conditions above. In \cite{cet,ETbook} the authors addressed Berry's conjecture on the  fractal dimension of the density function $|e^{it\partial_{xx}} g|^2$ and extended Rodnianski's result to higher order dispersive equations  with polynomial dispersion relation. In addition they studied applications to the vertex filament equation.

In this paper we first consider the case of polynomial dispersion relations with integer coefficients. In 
Section~\ref{sec:poly}, we obtain dimension bounds for the graph of solutions restricted to oblique\footnote{Our method  also applies to vertical lines $x=x_0$ in the polynomial case, see Remark~\ref{rmk:vert}.} lines in space--time  with rational slope:
\begin{theorem}\label{thm:obliq} Let $\omega$ be a polynomial of degree $d\geq 2$ with integer coefficients.   Let $g$ be  a non--constant  step function.    Then for all $r\in \Q$ and a.e.~$c$, the  function 
$$f(x)=e^{itL_\omega} g\big|_{t=c-rx}$$ 
is in $  C^{\alpha}_x$, for every $\alpha<\frac1{d(2^d+1)}$. Moreover,   the maximum dimension of the graph of real and imaginary parts of $f$  is in $[2-\frac1{2d}, 2-\frac1{d(2^d+1)}]$.  
\end{theorem}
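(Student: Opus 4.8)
The plan is to prove the Hölder regularity first; the dimension bounds follow from it.

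\textbf{Reduction to an exponential sum on the torus.} We may take $\widehat g(0)=0$. Since $g$ is a step function, $\widehat g(n)=b(n)/n$ for $n\neq 0$ with $b(n)=\sum_\nu c_\nu e^{-inx_\nu}$ a finite trigonometric polynomial in $n$ (the $x_\nu$ being the jump points), so $|b(n)|\lesssim 1$. By \eqref{solution}, on the line $t=c-rx$,
$$
f(x)=\sum_{n\neq 0}\frac{b(n)}{n}\,e^{i\omega(n)c}\,e^{i(n-r\omega(n))x}.
$$
Write $r=p/\ell$ in lowest terms. If $p=0$ the claim is contained in \cite{cet,ETbook} ($f$ is then $C^{1/2-}_x$ for a.e.\ $c$), so assume $p\neq 0$; then $Q(n):=\ell n-p\,\omega(n)$ is an integer polynomial of degree $d$, and after the change of variable $x\mapsto\ell x$ it suffices to show that for a.e.\ $c$, $F(x):=\sum_{n\neq 0}\frac{b(n)}{n}e^{i\omega(n)c+iQ(n)x}$ lies in $C^\alpha(\T)$ for every $\alpha<\tfrac1{d(2^d+1)}$. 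Decompose $F=\sum_N F_N$ dyadically in $n$; each $F_N$ is a trigonometric polynomial in $x$ of degree $\lesssim N^d$, so $\|F_N'\|_\infty\lesssim N^d\|F_N\|_\infty$, and summing $\|F_N(\cdot+h)-F_N\|_\infty\lesssim\min(\|F_N\|_\infty,|h|N^d\|F_N\|_\infty)$ and balancing at $N\sim|h|^{-1/d}$ reduces everything to the single estimate: for a.e.\ $c$, $\|F_N(c,\cdot)\|_{L^\infty_x}\lesssim_\epsilon N^{-\frac1{2^d+1}+\epsilon}$ for every $\epsilon>0$.

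\textbf{The exponential sum estimate (the crux).} After Abel summation in $n$ (splitting $F_N$ over the finitely many jumps and absorbing $e^{-inx_\nu}$ into the linear term, so the remaining weights $1/n$ have dyadic-block variation $\lesssim 1/N$), this reduces to uniform-in-$x$ bounds for the incomplete Weyl sums $\sum_{n\in I}e^{i\Phi_{c,x}(n)}$, $I\subset[N,2N]$, where $\Phi_{c,x}(n)=\omega(n)c+Q(n)x-nx_\nu$ has degree $d$ and leading coefficient proportional to $(c-px)\,\mathrm{lead}(\omega)$. The plan is to split in $x$: on \emph{minor arcs} --- $x$ for which this leading coefficient admits no rational approximation with denominator below a threshold $N^\tau$ --- Weyl's inequality gives a saving $N^{-\tau/2^{d-1}+\epsilon}$ (its exponent $1/2^{d-1}$ is the source of the $2^d$). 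On \emph{major arcs} --- $x$ near one of the countably many points where the oblique line meets a rational time $t_0$ --- I would use the Talbot quantization (\cite{berklei,olv}): at $t_0$ the evolution turns $g$ into a finite linear combination of translates of $g$, hence a step function, and comparing $F_N$ with this piecewise-constant profile (the non-principal residue classes contributing only a lower-order Weyl error) gives the bound \emph{provided} the crossing point stays away from the finitely many jump points; a Borel--Cantelli argument over the rational times --- this is where the hypothesis ``a.e.\ $c$'' is used --- secures this margin at all dyadic scales. Balancing $\tau$ against the Weyl exponent and the Borel--Cantelli constraint produces the exponent $\tfrac1{2^d+1}$.

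\textbf{Dimension bounds.} For the upper bound, $f\in C^\alpha$ gives $\overline\dim\,\Gamma(\operatorname{Re}f),\ \overline\dim\,\Gamma(\operatorname{Im}f)\le 2-\alpha$ for every $\alpha<\tfrac1{d(2^d+1)}$ by the standard box-counting estimate (cf.\ the proof of Corollary~\ref{cor:obliqueupper}). For the lower bound, fix any $r$ with $p\neq 0$: the $x$-spectrum of $F$ has, at dyadic frequency $K$, about $K^{1/d}$ coefficients of size $\sim K^{-1/d}$, so $\|F(\cdot+h)-F\|_{L^2_x}\asymp h^{1/(2d)}$ and this $B^{1/(2d)}_{2,\infty}$-regularity is sharp; at least one of $\operatorname{Re}f,\operatorname{Im}f$ inherits it, and $\dim\Gamma\ge 2-\tfrac1{2d}$ then follows for a.e.\ $c$ (hence for the maximum over $c$) by the energy-integral method of \cite{R,ET1,ChCo}, which upgrades the $L^2$ lower bound to the statement that the increments $F(x+h)-F(x)$ do not concentrate near $0$ --- itself a consequence of exponential sum estimates for the set $\{Q(n)\}$ and Gauss-sum identities at rational times.

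\textbf{Main obstacle.} The hard part is the exponential sum estimate: making Weyl's inequality work \emph{uniformly in $x$} while keeping the exceptional set of $c$ null. This is intrinsic --- for every $c$ there are $x$'s at which the line hits a rational time and Weyl's inequality is vacuous --- so one is forced to splice the minor-arc Weyl bound together with a quantization-based treatment of those $x$'s and balance the two through a Borel--Cantelli argument in $c$; it is this splice, and not Weyl's inequality alone, that caps the Hölder exponent at $\tfrac1{d(2^d+1)}$. A second, independent difficulty is the number-theoretic non-concentration estimate underlying the dimension lower bound.
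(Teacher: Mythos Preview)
Your upper-bound argument is essentially the paper's. On the major arc $t\approx 2\pi a/q$ with $q<N^\delta$, the paper strips off $\beta_t=t-2\pi a/q$ by partial summation, splits $\sum_n e^{inx+2\pi ia\omega(n)/q}$ into residue classes mod $q$, and sums the resulting geometric series to get $\lesssim N^\delta/\|qx/(2\pi)\|$; this is small unless $x$ too is near a rational with denominator $q$, in which case $c=t+rx$ is near a rational of denominator $\lesssim q\ell\le N^{\delta+}$, which for a.e.\ $c$ (those with no approximation $|c-2\pi r/s|\le s^{-2-\epsilon}$ infinitely often) can happen only finitely many times. Your quantization picture is this same computation in disguise --- the geometric series \emph{is} the Dirichlet-kernel profile of the quantized step function, and ``crossing point away from the jump points'' is exactly ``$x$ away from $2\pi j/q$'' --- but the paper's phrasing is more direct: it never invokes quantization or tracks the jumps $x_\nu$ of $g$, working purely with the exponential sum and using $g\in BV$ only at the end via $dg$.

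Your lower bound, however, has a genuine gap. Knowing $F\notin B^{1/(2d)+}_{2,\infty}$ does \emph{not} by itself give a dimension lower bound: Deliu--Jawerth (Theorem~\ref{thm:DJ}) requires $F\notin B^{1/(2d)+}_{1,\infty}$, an $L^1$-based statement. The references you invoke do not use an ``energy-integral method'' here --- \cite{ChCo} uses precisely an $L^4$--$L^1$ interpolation (see Remark~\ref{rmk:ChCo}), and \cite{R,ET1} argue via Besov spaces. The missing ingredient is an $L^4$ \emph{upper} bound on the dyadic blocks, $\|P_N F\|_{L^4}\lesssim N^{-1/(2d)+}$, obtained by expanding the fourth power and counting solutions to $Q(n_1)+Q(n_3)=Q(n_2)+Q(n_4)$ with $|n_i|\sim N^{1/d}$; factoring $Q(n)-Q(m)$ through $n-m$ and the divisor bound give $\lesssim N^{2/d+}$ such quadruples. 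Interpolating this $L^4$ bound against the $L^2$ lower bound yields $\|P_N F\|_{L^1}\gtrsim N^{-1/(2d)-}$ along a subsequence, and then Theorem~\ref{thm:DJ} applies. Your phrase ``exponential sum estimates for the set $\{Q(n)\}$'' may be reaching toward this additive-energy count, but ``Gauss-sum identities at rational times'' play no role in the lower bound, and without the $L^4$ step the argument does not close.
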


In \cite{O2}, Oskolkov already studied the linear Schr\"odinger evolution on oblique lines. In our notation he proved that the restriction of the function $H$, \eqref{eq:H}, to the   lines $t=Nx+M\pi$ and $x=Nt+M\pi$, where $M$, $N$ are odd integers, is $C^{\frac14}$. This result can be extended to the linear Schr\"odinger evolution provided that the data is chosen carefully\footnote{Using \cite[Lemma 2]{O2} and following the proof of Corollary~\ref{cor:obliqueupper} below,  one can easily prove that for  any  odd $N\geq 3$, if $g$ is a non--constant step function on the torus with steps only at the points $\{\frac{ M\pi}N:M\text{ odd integer in } (-N,N]\}$, then the the dimension of the graph of the function $e^{it\partial_{xx}}g\big|_{t=Nx}$  is at most  $\frac74$.   Note that in this result one needs to choose the initial data depending on the oblique line. We also note that our lower bound (Proposition~\ref{prop:obliquelower}) applies to this case, and hence the  dimension is exactly $\frac74$. }  depending on the line.  

The lower bound in Theorem~\ref{thm:obliq} follows from an    $L^4$ argument which applies to more general functions, see Proposition~\ref{prop:obliquelower}. A reason for expecting a larger dimension than $\frac32$ is the observation that $\widehat g(n)$ is essentially the Fourier coefficient  at frequency $\sim n^d$ for the diagonal slices, which drops the Sobolev index by a factor of $d$. The lower bound for the dimension implies that the restriction of the solution to such lines cannot be smoother than $C^{\frac1{2d}}$.   However, for $r\in\Q$ and a.e.~$c$, we prove that the restriction of the solution to the lines $t=c- r x$ is $C^{\alpha_d }$ for some $\alpha_d>0$, which yields
  an upper bound for the fractal dimension. To do this we obtain    exponential sum estimates such as (when $d=2$) 
$$\sup_{x} \big| \sum_{n\sim N} e^{i nx + in^2 (c-rx)}\big| \lesssim_{\epsilon} N^{\frac45 + \epsilon},$$ for $r\in\Q$ and a.e.~$c \in \mathbb{R}$.  
 Furthermore, we 
extend the dimension bounds to cubic NLS  and KdV   evolutions using nonlinear smoothing estimates from \cite{ET1,egtz1}. More precisely, we prove 
\begin{theorem}\label{thm:NLSoblique}
Consider the Wick ordered cubic NLS equation\footnote{We consider the Wick ordered case for simplicity, the result also applies to the regular NLS equation.} 
\be\label{NLSwick}
\left\{\begin{array}{l}
iu_t+u_{xx}\pm |u|^2u \mp Pu=0,\,\,\,\, t\in \R, x\in\T,\\
u(0,\cdot)=g(\cdot), 
\end{array}\right.
\ee
where  $g$ is a non--constant step function and $P=\frac1\pi\|g\|_{L^2(\T)}^2$.  
 Fix  $k ,  \ell\in \N$   with $(k,\ell) = 1$. 
 For $c\in \R$,  let $F_c(x)=u(c-\frac{k}\ell x,x)$, $x\in [0,2\pi\ell]$. Then 
 for a.e.~$c$, the function $F_c$ is in $ C^{\frac1{10}-}$ and we  have $D_c\in[\frac74,\frac{19}{10}]$, where $D_c$ is  the maximum dimension of the graphs of real and imaginary parts of $F_c$.  
\end{theorem}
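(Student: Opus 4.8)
The plan is to peel off the linear flow, reduce the rough part to Theorem~\ref{thm:obliq} and the (smoother) Duhamel correction to a soft regularity estimate, and then read off the two dimension bounds. Write $u=e^{it\partial_{xx}}g+N$, where
\[
N(t)=\mp i\int_0^t e^{i(t-s)\partial_{xx}}\big(|u|^2u\mp Pu\big)(s)\,ds .
\]
Since $g$ is a step function it lies in $H^{\frac12-}(\T)$, and the Wick ordered cubic NLS is well--posed in $L^2(\T)$; thus on the finite time interval swept out by the line $t=c-\frac k\ell x$, $x\in[0,2\pi\ell]$ (namely $t\in[c-2\pi k,c]$) we have $u\in C^0_tH^{\frac12-}_x$, hence $u\in C^0_tL^p_x$ for every $p<\infty$. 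By the nonlinear smoothing estimates for the Wick ordered equation (\cite{ET1,egtz1}) the Duhamel term gains half a derivative: $N\in C^0_tH^{1-}_x$, uniformly on $[c-2\pi k,c]$. So it remains to study $e^{it\partial_{xx}}g\big|_{t=c-\frac k\ell x}$ and $N\big|_{t=c-\frac k\ell x}$ separately.

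For the linear part I would apply Theorem~\ref{thm:obliq} with $d=2$, $\omega(n)=-n^2$, and slope $r=\frac k\ell\in\Q$. After the rescaling $y=x/\ell$ the frequencies $n-\frac k\ell\omega(n)=n+\frac k\ell n^2$ become the integers $\ell n+kn^2$ (this is why the natural period is $2\pi\ell$), and Theorem~\ref{thm:obliq} gives $e^{it\partial_{xx}}g\big|_{t=c-\frac k\ell x}\in C^\alpha_x$ for every $\alpha<\frac1{d(2^d+1)}=\frac1{10}$ and a.e.\ $c$, while Proposition~\ref{prop:obliquelower} (with $d=2$) shows the graph of its real and imaginary parts has dimension at least $2-\frac1{2d}=\frac74$.

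For the Duhamel part I would show it restricts to the line as a $C^{\frac14-}$ function of $x$, for every $c$, so that it contributes nothing new to either bound. Differentiating the equation, $\partial_tN=-iN_{xx}\pm i(|u|^2u\mp Pu)$; since $u\in C^0_tL^p_x$ for all $p<\infty$ we have $|u|^2u\in C^0_tL^2_x$, whence $\partial_tN\in C^0_tH^{-1-}_x$. Interpolating against $N\in C^0_tH^{1-}_x$ gives $N\in C^\beta_tH^{(1-2\beta)-}_x$ for $\beta\in[0,1]$, and in particular $N\in C^{\frac14-}_tL^\infty_x$; combined with $H^{1-}_x\hookrightarrow C^{\frac12-}_x$ for the purely spatial variation, splitting an increment along the line into a spatial move of size $\delta$ and a temporal move of size $\frac k\ell\delta$ yields $N\big|_{t=c-\frac k\ell x}\in C^{\frac14-}_x\subset C^{\frac1{10}-}_x$. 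Therefore $F_c\in C^\alpha$ for every $\alpha<\frac1{10}$ and a.e.\ $c$, so $D_c\le 2-\frac1{10}=\frac{19}{10}$. The lower bound $D_c\ge\frac74$ then follows by applying Proposition~\ref{prop:obliquelower} to $F_c$, with the linear evolution playing the role of the rough part: for a non--constant step function $\widehat g(n)\sim\frac1n$ on a set of positive density, so the linear piece is $L^2$--rough at exactly the sharp rate ($\|P_M\big(e^{it\partial_{xx}}g|_{t=c-\frac k\ell x}\big)\|_{L^2}\sim M^{-1/4}$ along a sequence $M\to\infty$), whereas the smoothing for $N$ — combined with the halving of the Sobolev index under restriction to an oblique line that appears in the proof of Theorem~\ref{thm:obliq} — keeps $N|_{\rm line}$ in a strictly better space, hence negligible.

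The step I expect to be the main obstacle is precisely this last point: showing that the half--derivative gained by $N$ survives the oblique restriction with room to spare. Because $N$ is a Duhamel integral rather than a free solution, the clean fact ``$e^{it\partial_{xx}}h\big|_{t=-rx}\in H^{\sigma/2}$ when $h\in H^\sigma$'' does not apply verbatim; the honest route is to use the $X^{s,b}$ smoothing bounds underlying \cite{ET1,egtz1} (which place $N$ in $X^{1-,\frac12+}$ and thereby control $\widehat N(\tau,n)$ away from the paraboloid $\tau=-n^2$) and then run the line--restriction argument on that structured object. The exponential--sum estimates feeding Theorem~\ref{thm:obliq} for $d=2$ are the other quantitative heart of the matter, but those are already in hand; as in Theorem~\ref{thm:obliq}, the gap $[\frac74,\frac{19}{10}]$ rather than the conjecturally sharp value $\frac74$ is an artifact of the current exponential--sum bounds.
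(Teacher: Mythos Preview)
Your overall architecture matches the paper's: split $u=e^{it\partial_{xx}}g+N$, handle the linear piece via Theorem~\ref{thm:obliq}/Proposition~\ref{prop:obliquelower}, and show the Duhamel correction is smoother along the line. The upper bound works as you wrote it.

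The gap is at the lower bound, and you have correctly flagged it. With only a \emph{half}-derivative smoothing, your interpolation gives $N|_{\text{line}}\in C^{1/4-}$, hence $N|_{\text{line}}\in B^{1/4-}_{1,\infty}$. But Proposition~\ref{prop:obliquelower} (via Theorem~\ref{thm:DJ}) shows the linear piece fails to lie in $B^{s}_{1,\infty}$ for every $s>1/4$; to conclude the same for $F_c=L+N$ you need $N|_{\text{line}}\in B^{s}_{1,\infty}$ for those same $s>1/4$. Sitting exactly at $C^{1/4-}$ is the borderline and does not close the argument; the ``strictly better space'' claim is precisely what is not established.

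The paper's fix is much simpler than the $X^{s,b}$ route you sketch: the smoothing estimate in \cite{egtz1} actually yields a \emph{full} derivative at this level, namely $N\in C^0_tH^{3/2-}_x\subset C^0_tC^{1-}_x$ for $g\in H^{1/2-}$. Then $\partial_t N=iN_{xx}+\text{(cubic)}\in C^0_tH^{-1/2-}_x$, and the same interpolation you ran gives $N\in C^{1/2-}_{t,x}$, hence $N|_{\text{line}}\in C^{1/2-}\subset B^{1/2-}_{1,\infty}$, which sits comfortably above $1/4$ and makes the Deliu--Jawerth argument go through immediately. So the obstacle you identify is real, but the resolution is to quote the sharper smoothing (one derivative, not one half) rather than to unpack $X^{s,b}$ structure.
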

The statement follows from the $d=2$ case of Theorem~\ref{thm:obliq}  for the linear group, and the following implication of a nonlinear  smoothing estimate in \cite{egtz1} which is of independent interest: For initial data $g\in H^{\frac12-}(\T)$, the solution of \eqref{NLSwick} satisfies locally in time:
\be\label{calphasmooth}
u-e^{it\partial_{xx}}g\in C^{\frac12-}_{t,x}.
\ee
Recall that, because of quantization, the linear evolution cannot even be a continuous function of $x$ at rational times  when $g$ is a step function. We also note that a lower bound for the dimension can be obtained for more general data $g$, see Remark~\ref{rmk:obliqlower}.

In the case of     the KdV equation, 
\be\label{KdV}
\left\{\begin{array}{l}
u_t+u_{xxx}+uu_x=0,\,\,  x\in\T, \,\, t\in \R,\\
u(0,\cdot)=g(\cdot), 
\end{array}\right. 
\ee 
or the Airy equation, 
a statement similar to Theorem~\ref{thm:NLSoblique} is valid: 
\begin{theorem}\label{thm:kdvoblique}
Let $g$ be a non--constant, mean--zero, and  real valued step function on the torus. Let $u(t,x)$ solve the Airy equation or the KdV equation \eqref{KdV} with data $g$.   
 Fix  $k ,  \ell\in \N$   with $(k,\ell) = 1$. 
 For $c\in \R$,  let $F_c(x)=u(c-\frac{k}\ell x,x)$, $x\in [0,2\pi\ell]$. Then 
 for a.e.~$c$,   $F_c \in C^{\frac1{27}-}$ and the dimension of the graph of $F_c$ is in $  [\frac{11}6,\frac{53}{27}]$.
\end{theorem}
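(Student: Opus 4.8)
\emph{Sketch of proof.} The plan is to run the two--step scheme used for Theorem~\ref{thm:NLSoblique}: first read off the Airy (linear) case from Theorem~\ref{thm:obliq}, and then argue that passing to the KdV flow only adds a space--time H\"older error too regular to move either dimension bound. For the Airy equation we take $u=W(t)g=e^{itL_\omega}g$, with $W(t)=e^{-t\partial_x^3}$ and the integer polynomial dispersion relation $\omega(n)=n^3$ of degree $d=3$. Since $\frac{1}{d(2^d+1)}=\frac{1}{27}$, $2-\frac{1}{2d}=\frac{11}{6}$, and $2-\frac{1}{d(2^d+1)}=\frac{53}{27}$, applying Theorem~\ref{thm:obliq} with $r=k/\ell\in\Q$ gives, for a.e.\ $c$, that $F_c=W(t)g\big|_{t=c-\frac{k}{\ell}x}$ lies in $C^{\frac{1}{27}-}_x$ and that the dimension of its graph is in $[\frac{11}{6},\frac{53}{27}]$: the upper bound via the embedding ``$f\in C^\alpha\Rightarrow$ graph dimension $\le 2-\alpha$'' used in the proof of Corollary~\ref{cor:obliqueupper}, and the lower bound via Proposition~\ref{prop:obliquelower}, which applies because $g$ is a non--constant step function. (Since $\ell\in\N$ and $e^{-2\pi i kn^3}=1$, this $F_c$ is $2\pi\ell$--periodic, which is why one takes $x\in[0,2\pi\ell]$; for the KdV flow below $F_c$ is viewed simply as a function on this interval.) This disposes of the Airy case.

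For the KdV equation \eqref{KdV} with data $g$, write $u=W(t)g+v$. Since $g$ is real valued and mean zero we may work in the standard mean--zero well--posedness class, and a step function lies in $H^{\frac{1}{2}-}(\T)$. The key input is the nonlinear smoothing estimate for periodic KdV of Erdo\u{g}an--Tzirakis \cite{ET1,ET2}, which yields, locally in time, $v\in C^0_tH^{\frac{3}{2}-}_x$. Feeding this back into $v_t=-v_{xxx}-\frac{1}{2}\partial_x(u^2)$ (so that $v_t\in C^0_tH^{-\frac{3}{2}-}_x$, the worst term being $v_{xxx}$) and interpolating between the spatial and the temporal regularity, one obtains $v\in C^{\gamma}_{t,x}$ locally in time for some $\gamma$ that can be taken larger than $\frac{1}{6}$ (in fact close to $\frac{1}{4}$); in particular $\gamma>\frac{1}{27}$. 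Restricting the $C^\gamma_{t,x}$ function $v$ to the Lipschitz line $t=c-\frac{k}{\ell}x$ gives a $C^\gamma_x$ function, so
\[
F_c(x)=W(t)g\big|_{t=c-\frac{k}{\ell}x}+v\big|_{t=c-\frac{k}{\ell}x},
\]
the first summand being in $C^{\frac{1}{27}-}_x$ for a.e.\ $c$ by the Airy case and the second in $C^\gamma_x\subset C^{\frac{1}{27}-}_x$. Hence $F_c\in C^{\frac{1}{27}-}_x$ for a.e.\ $c$, so the graph dimension is $\le\frac{53}{27}$. For the matching lower bound, Proposition~\ref{prop:obliquelower} (together with Remark~\ref{rmk:obliqlower}) applies to $F_c$ itself: its $L^4$--type criterion is insensitive to adding $v\big|_{t=c-\frac{k}{\ell}x}$ because $\gamma>\frac{1}{6}=2-\frac{11}{6}$, so the required lower bounds on the $L^4$ norms of the frequency--localized pieces of $F_c$ are inherited from the linear part, giving graph dimension $\ge\frac{11}{6}$.

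The main obstacle is the smoothing step for KdV: the estimate in \cite{ET1,ET2} is stated in the Sobolev scale (a gain of almost one spatial derivative), whereas what is needed here is joint space--time H\"older regularity $C^\gamma_{t,x}$ of $v$, since only the H\"older exponent survives restriction to an oblique line. Promoting the spatial gain to H\"older control in time --- via the equation and interpolation, or directly through bilinear estimates in a Bourgain--type space adapted to H\"older norms --- is the delicate part at the low regularity $H^{\frac{1}{2}-}$, where $u$ itself is barely a function; one must also keep enough slack in the numerology to guarantee $\gamma>\frac{1}{6}$, so that the smoother correction cannot pull the fractal dimension below $\frac{11}{6}$. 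The remaining ingredients --- the $C^\alpha$-to-dimension embedding, the Lipschitz--restriction estimate for H\"older functions, and the $L^4$ lower bound --- are already in place.
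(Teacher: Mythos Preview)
Your approach is essentially the paper's. Two small corrections are worth noting. First, the interpolation between $v\in C^0_tH^{\frac32-}_x$ and $v\in C^1_tH^{-\frac32-}_x$ actually yields $v\in C^{\frac13-}_{t,x}$, not merely ``close to $\frac14$'': to land in $H^{\frac12+}_x\hookrightarrow L^\infty_x$ one takes the interpolation parameter $\theta$ just below $\frac13$ in $H^{\frac32-3\theta}_x$, and this is the value the paper records. Your weaker claim $\gamma>\frac16$ is of course all that is needed. Second, for the lower bound the paper does not reapply Proposition~\ref{prop:obliquelower} to $F_c$ via its $L^4$ machinery, but rather uses the Besov mechanism that underlies it: the linear restriction fails to lie in $B^{\frac16+}_{1,\infty}$, the nonlinear piece (after an even $4\pi\ell$--periodic extension, needed since $v$ restricted to the oblique segment is not periodic) lies in $C^{\frac13-}\subset B^{\frac13-}_{1,\infty}$, so the sum is not in $B^{\frac16+}_{1,\infty}$ and Theorem~\ref{thm:DJ} gives dimension $\ge\frac{11}{6}$. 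Your sketch omits the periodic extension step; it is minor but should be included.
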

In addition, analogous to \eqref{calphasmooth}, the smoothing result of \cite{ET1} implies that the nonlinear part of the KdV evolution for $H^{\frac12-}$ data is in $C^{\frac13-}_{t,x}$. 

In Section~\ref{sec:poly_space} we  obtain dimension estimates for space slices in the case of polynomial dispersion relation improving some results of \cite{cet}. In particular, concerning the Airy and KdV evolutions, the authors in \cite{cet} obtained  the dimension bounds $D\in[\frac54,\frac74]$. We improve the lower bound:
\begin{theorem}\label{thm:kdv3/2} Let $g\in BV(\T)\setminus H^{\frac12+}(\T)$ be real valued. Then for a.e.   $ t $, the dimension of the graph of both $e^{-t\partial_{xxx}}g$ and  the solution $u(t,\cdot)$ of \eqref{KdV} is at least $  \frac32$. 
\end{theorem}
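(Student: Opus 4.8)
The plan is to bound below the upper box--counting (Minkowski) dimension of the graph (this is the appropriate notion, since the hypothesis on $g$ constrains only a subsequence of scales). Fix a time $t$, let $f(t,\cdot)$ be either $e^{-t\partial_{xxx}}g$ or the KdV solution $u(t,\cdot)$, and set $F_\delta(x):=f(t,x+\delta)-f(t,x)$. From the elementary inequality $N(\delta)\ge \delta^{-1}\sum_J \mathrm{osc}_J f$ for the $\delta$--covering number of the graph (the sum over the $\approx \delta^{-1}$ arcs $J\subset\T$ of length $\delta$), it suffices to produce, for a.e.\ $t$, a sequence $\delta_j\downarrow0$ and thresholds $\lambda_j\approx\delta_j^{1/2+o(1)}$ with
\[
\big|\{x\in\T:\ |F_{\delta_j}(x)|>\lambda_j\}\big|\ \gtrsim\ \delta_j^{o(1)}\ :
\]
then $\gtrsim \delta_j^{-1+o(1)}$ of those arcs meet the set, each giving $\mathrm{osc}_J f+\mathrm{osc}_{J'}f>\lambda_j$ for the adjacent arc $J'$, so $\sum_J\mathrm{osc}_J f\gtrsim\delta_j^{-1/2+o(1)}$ and $N(\delta_j)\gtrsim\delta_j^{-3/2+o(1)}$. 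The displayed distributional bound follows in turn from H\"older's inequality: $\|F_\delta\|_{L^1}\ge\|F_\delta\|_{L^2}^3\|F_\delta\|_{L^4}^{-2}$ and, splitting $\int_\T|F_\delta|$ at height $\lambda$, $\big|\{|F_\delta|>\lambda\}\big|^{3/4}\ge(\|F_\delta\|_{L^1}-2\pi\lambda)\|F_\delta\|_{L^4}^{-1}$. Hence everything reduces to proving, for a.e.\ $t$ and along a suitable $\delta_j\downarrow0$, the estimates (A) $\|F_{\delta_j}\|_{L^2}^2\gtrsim\delta_j^{1+o(1)}$ and (B) $\|F_{\delta_j}\|_{L^4}^4\lesssim\delta_j^{2-o(1)}$, and then taking $\lambda_j$ a small fixed multiple of $\|F_{\delta_j}\|_{L^1}$.

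For the linear flow, $\|F_\delta\|_{L^2}^2=\sum_n|\widehat g(n)|^2|e^{in\delta}-1|^2$ is $t$--independent by unitarity, and $\lesssim\delta$ since $g\in BV(\T)$ gives $|\widehat g(n)|\lesssim\langle n\rangle^{-1}$. For the matching lower bound: if $\sum_{2^k\le|n|<2^{k+1}}|\widehat g(n)|^2\le 2^{-(1+\eta)k}$ for all large $k$ and some fixed $\eta>0$, a dyadic summation would force $g\in H^s(\T)$ for every $s<\tfrac{1+\eta}2$, contradicting $g\notin H^{\frac12+}(\T)$. Hence there is $k_j\uparrow\infty$ with $\sum_{2^{k_j}\le|n|<2^{k_j+1}}|\widehat g(n)|^2\ge 2^{-k_j(1+o(1))}$, and choosing $\delta_j:=2^{-k_j}$ (so that $|e^{in\delta_j}-1|\gtrsim1$ on that dyadic block) gives (A). This is where the hypothesis $g\in BV(\T)\setminus H^{\frac12+}(\T)$ enters; it yields (A) only along a sequence, which is why the conclusion is stated for the upper box dimension.

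For (B), write $f(t,x)=\sum_n b_n e^{itn^3+inx}$ with $b_n=\widehat g(n)(e^{in\delta}-1)$, so $|b_n|\lesssim\min(\langle n\rangle^{-1},\delta)$, and expand
\[
\|F_\delta\|_{L^4}^4=\sum_{n+m=n'+m'}b_nb_m\,\overline{b_{n'}b_{m'}}\;e^{it(n^3+m^3-n'^3-m'^3)}.
\]
When $n+m=n'+m'=k$ the identity $n^3+m^3-n'^3-m'^3=-3k(nm-n'm')$ shows the phase vanishes exactly when $k=0$ or $\{n,m\}=\{n',m'\}$; the resulting time--independent (``resonant'') part is $\lesssim\|F_\delta\|_{L^2}^4\lesssim\delta^2$. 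For the non--resonant part $E(t)$, integrating over $t\in[0,T]$ and using $\big|\int_0^Te^{i\mu t}\,dt\big|\le2/|\mu|$ gives
\[
\Big|\int_0^T E(t)\,dt\Big|\ \lesssim\ \sum_{k\neq0}\ \sum_{\substack{n+m=n'+m'=k\\ nm\neq n'm'}}\ \frac{|b_nb_mb_{n'}b_{m'}|}{|k|\,|nm-n'm'|},
\]
and a counting estimate (change variables to $k$, $p=n-m$, $p'=n'-m'$ so that $|nm-n'm'|=|p^2-p'^2|/4$; first truncate the frequencies at $\delta^{-C}$, the $L^2$--tail being negligible; then sum using $|b_n|\lesssim\min(\langle n\rangle^{-1},\delta)$) bounds this by $\delta^{2-\epsilon}$. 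Together with the analogous second--moment bound (or Chebyshev plus Borel--Cantelli on the fixed countable set $\{\delta_j\}$) this gives (B) for a.e.\ $t$. For the KdV solution, write $u(t,\cdot)=e^{-t\partial_{xxx}}g+w(t,\cdot)$; the nonlinear smoothing estimate of \cite{ET1} (which gains almost a full derivative over the $H^{\frac12-}(\T)$ regularity of the data, recall $BV(\T)\subset H^{\frac12-}(\T)$) gives $w(t,\cdot)\in H^\sigma(\T)$ for a fixed $\sigma>\tfrac34$, locally in time. Hence the spatial increments of $w$ are $O(\delta^\sigma)$ in $L^2$ and $O(\delta^{\sigma-1/4})$ in $L^4$, i.e.\ smaller than the bounds in (A) and (B) by a fixed power of $\delta$; so adding $w$ affects neither estimate and the argument yields $\dim\big(\mathrm{graph}\,u(t,\cdot)\big)\ge\tfrac32$ for a.e.\ $t$.

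The main obstacle is establishing (B) pointwise in $t$ (a.e.). The arithmetic identity controls the \emph{mean} of the non--resonant sum with no effort, but upgrading to an a.e.\ statement along $\{\delta_j\}$ needs a genuine second-- (or higher--) moment estimate, and the preliminary truncation is unavoidable because $\sum_n|b_n|=\infty$ for $BV$ data; moreover the counting estimate must be run with enough uniformity in $\delta$ that the $\epsilon$--losses in (A) and (B) wash out, leaving the exponent of $N(\delta_j)$ equal to $\tfrac32$ in the limit. This parallels the $L^4$ arguments of \cite{R,ET1,ET2,cet} and the oblique--line lower bound of Proposition~\ref{prop:obliquelower}.
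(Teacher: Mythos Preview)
Your overall plan---extracting the lower bound from an $L^2$--$L^4$ interpolation, with the $L^2$ piece coming from $g\notin H^{\frac12+}(\T)$ and the $L^4$ piece from the cubic factorization $n^3+m^3-n'^3-m'^3=-3k(nm-n'm')$ on the set $n+m=n'+m'=k$---is exactly the mechanism the paper uses. The gap is where you say it is: step~(B). You bound only $\big|\int_0^T E(t)\,dt\big|$, which says nothing about $|E(t)|$ pointwise, and then invoke an ``analogous second--moment bound'' that is not carried out. That second moment expands into an eightfold sum constrained by $k_1(n_1m_1-n_1'm_1')=k_2(n_2m_2-n_2'm_2')$ and still carrying the weights $b_n$; it may well be tractable, but you have not shown it, and without it Borel--Cantelli is unavailable and (B) remains open.

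The paper sidesteps the averaging issue entirely: it obtains the $L^4$ bound \emph{deterministically} for each Khinchin--L\'evy value of $t/(2\pi)$. Working with the unweighted dyadic exponential sum $H_N(x)=\sum_{N\le n<2N}e^{inx+in^3t}$ rather than the weighted increment $F_\delta$, the same factorization followed by summing a geometric series in one variable reduces $\|H_N\|_{L^4_x}^4$ to a quantity dominated by $\sum_{|w|\les N^2}\min\big(N,\|wt/(2\pi)\|^{-1}\big)$; the classical Diophantine sum estimate (Lemma~2.2 of \cite{Vau}) bounds this by $N^{2+}$ whenever $t/(2\pi)$ admits rational approximations $a/q$ with $q\in[N,N^{1+\epsilon}]$. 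This yields $\|H_N\|_{L^4_x}\les N^{\frac12+}$ for a.e.\ $t$ with no time--averaging, no second moments, and no Borel--Cantelli, and the dimension bound $\tfrac32$ then follows from the Deliu--Jawerth/Besov framework of Theorem~\ref{DJcor} (with $q=4$, $\gamma=\tfrac12$, $r=\tfrac12$), which is the packaged form of your oscillation/interpolation step. Two smaller points: your oscillation inequality presupposes continuity of $e^{-t\partial_{xxx}}g$, which for $BV$ data and irrational $t/(2\pi)$ is Oskolkov's theorem~\cite{O}; and replacing the weights $b_n$ by dyadic localization is precisely what makes the reduction to $\sum_w\min(N,\|wt\|^{-1})$ clean---with the weights still in place that step is much less transparent.
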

 
In Section~\ref{sec:nonpoly}, we  obtain dimension  estimates for the graphs of   equations with non--polynomial dispersion relations.
Olver and his collaborators  \cite{olv,chenolv1,chenolv,olvernew}  provided numerical simulations of the Talbot effect for a large class of dispersive equations. In the case of polynomial dispersion, they numerically confirmed the rational/irrational dichotomy discussed above. An interesting  question that the authors raised is the appearance of such phenomena in the case of non--polynomial dispersion relations, $\omega(n)$.   Important examples are fractional Sch\"odinger/Airy type equations ($\omega(n)=|n|^\alpha, \alpha>0, \alpha\not\in \N$), 
Boussinesq equation   ($\omega(n)=\sqrt{n^2+n^4}$),  the gravity water wave equation ($\omega(n)=\sqrt{n \tanh(n)}$), and the gravity--capillary wave equation ($\omega(n)=\sqrt{(n+n^3) \tanh(n)}$). In Section~\ref{sec:nonpoly}, we obtain dimension bounds for the solution  of each of these equations using exponential sum estimates from \cite{Vau,Gr,Iw,Bo,He}. For example using van der Corput bounds we obtain\footnote{As in the case of the cubic NLS equation discussed above the dimension bounds for $\omega(n)=|n|^\alpha$, $\alpha\in (1,2)$ can be extended to the cubic fractional NLS equation using the smoothing bounds in \cite{det,egtz1}, see Theorem~\ref{thm:fracNLS} below.} 
\begin{theorem}\label{thm:nalpha}
For $\alpha\in (0,2)\setminus\{1\}$ define 
$$\beta =\left\{\begin{array}{ll}
\frac\alpha2,&  \alpha \in(0,1),\\
 1-  \frac\alpha2, & \alpha\in(1,\frac32],\\
  \frac12 - \frac\alpha6, &  \alpha\in( \frac32,2).
  \end{array}\right.
$$   For any $g\in BV(\T)$ and for each $t\neq 0$, the linear fractional Schr\"odinger evolution satisfies: $e^{it(-\Delta)^{\frac\alpha2} } g\in C^\beta_x$. In particular\footnote{We define $D_t(\omega,g)$ as the maximum  dimension  of the graphs of real and imaginary parts of $e^{itL_\omega} g$ as  functions of $x$.},   $D_t(\omega,g)\leq 2-\beta$. If in addition $g\not\in H^{\frac12+}(\T)$, then for each $t\neq 0$, $D_t(\omega,g)\geq 1+\beta$.
\end{theorem}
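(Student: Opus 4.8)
The plan is to derive both bounds from a single van der Corput--type estimate for the exponential sum $\sum_{1\le |n|\le N}e^{it|n|^\alpha+inx}$, and then run two soft principles on top of it: a summation-by-parts argument converting the sum estimate into Hölder regularity (hence an upper bound for the dimension), and a box-counting inequality converting a lower bound for the $L^2$ modulus of continuity into a lower bound for the dimension.

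\emph{From the exponential sum to Hölder regularity.} Write $f=e^{it(-\Delta)^{\alpha/2}}g=\sum_n\widehat g(n)e^{it|n|^\alpha+inx}$. Since $g\in BV(\T)$, its distributional derivative $\mu=g'$ is a finite measure with $\widehat\mu(n)=in\,\widehat g(n)$, so for a Littlewood--Paley block ($j\ge1$),
\[
P_jf(x)=\int_\T K_j(x-y)\,d\mu(y),\qquad K_j(z)=\sum_{n\neq0}\frac{\psi(2^{-j}n)}{in}\,e^{it|n|^\alpha+inz}.
\]
The symbol $\psi(2^{-j}n)/(in)$ has size $\lesssim 2^{-j}$, successive differences $\lesssim 2^{-2j}$, and is supported on $\sim 2^j$ frequencies, so summation by parts gives $\|K_j\|_{L^\infty}\lesssim 2^{-j}\sup_z\sup_{1\le N\lesssim 2^j}\big|\sum_{1\le|n|\le N}e^{it|n|^\alpha+inz}\big|$. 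Hence, once I know
\[
\sup_{z\in\R}\Big|\sum_{1\le|n|\le N}e^{it|n|^\alpha+inz}\Big|\lesssim_{t,\alpha}N^{1-\beta}\qquad(N\ge1,\ t\neq0),
\]
I get $\|P_jf\|_{L^\infty}\lesssim_t 2^{-j\beta}\|g\|_{BV}$, i.e. $f\in C^\beta_x=B^\beta_{\infty,\infty}$ (note $0<\beta<\tfrac12$ throughout), and therefore $D_t(\omega,g)\le 2-\beta$ by the standard box-counting estimate for graphs of $C^\beta$ functions, applied to $\Re f$ and $\Im f$.

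\emph{The exponential sum estimate.} Fix $t\neq0$; by symmetry it suffices to sum over $n>0$, where $\phi(n)=t n^\alpha+nz$ is smooth with $|\phi''(n)|\asymp_\alpha|t|M^{\alpha-2}$ and $|\phi'''(n)|\asymp_\alpha|t|M^{\alpha-3}$ on a dyadic block $n\sim M$, uniformly in $z$. The van der Corput second-derivative test gives $\big|\sum_{n\sim M}e^{i\phi(n)}\big|\lesssim|t|^{1/2}M^{\alpha/2}+|t|^{-1/2}M^{1-\alpha/2}$ and the third-derivative test gives $\big|\sum_{n\sim M}e^{i\phi(n)}\big|\lesssim|t|^{1/6}M^{(3+\alpha)/6}+|t|^{-1/6}M^{1-\alpha/6}$. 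A short case check shows that for $\alpha\in(0,1)$ the second test wins with dominant term $M^{1-\alpha/2}$, for $\alpha\in(1,\tfrac32]$ the second test wins with dominant term $M^{\alpha/2}$, and for $\alpha\in(\tfrac32,2)$ the third test wins with dominant term $M^{(3+\alpha)/6}=M^{1/2+\alpha/6}$; in each regime the winning exponent is precisely $1-\beta$ for the $\beta$ in the statement. Summing the dyadic estimates geometrically over $M\le N$ (the finitely many low blocks on which the trivial bound is better contribute an $O_{t,\alpha}(1)$ term) yields the displayed bound, and with it the Hölder regularity and the upper bound on $D_t(\omega,g)$.

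\emph{The lower bound $D_t(\omega,g)\ge1+\beta$.} Since $e^{it|n|^\alpha}$ is unimodular, $\|f(\cdot+h)-f(\cdot)\|_{L^2}=\|g(\cdot+h)-g(\cdot)\|_{L^2}$ for all $h$; and $g\notin H^{\frac12+}(\T)$ forces $g\notin B^{\frac12+\e}_{2,\infty}$ for every small $\e>0$ (using $B^{\frac12+\e}_{2,\infty}\hookrightarrow H^{\frac12+\e/2}$), so there is $h_k\downarrow0$ with $\|g(\cdot+h_k)-g(\cdot)\|_{L^2}\ge h_k^{\frac12+\e}$. On the other hand, partitioning $\T$ into $\sim\delta^{-1}$ intervals $I_j$ of length $\delta$, the number $\mathcal{N}_\delta$ of $\delta$-squares meeting the graph of $f$ satisfies
\[
\mathcal{N}_\delta\gtrsim\delta^{-1}\sum_j\mathrm{osc}_{I_j}(f)\gtrsim\delta^{-2}\,\|f(\cdot+\delta)-f(\cdot)\|_{L^1(\T)}\gtrsim\delta^{-2}\,\frac{\|f(\cdot+\delta)-f(\cdot)\|_{L^2}^2}{\|f(\cdot+\delta)-f(\cdot)\|_{L^\infty}},
\]
and by the previous step $\|f(\cdot+\delta)-f(\cdot)\|_{L^\infty}\lesssim_t\delta^\beta$. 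Evaluating at $\delta=h_k$ gives $\mathcal{N}_{h_k}\gtrsim_t h_k^{-(1+\beta-2\e)}$, so $\overline{\dim}_B(\mathrm{graph}\,f)\ge1+\beta-2\e$; letting $\e\to0$, and passing to whichever of $\Re f,\Im f$ carries at least half of the $L^2$-norm of the increment along a subsequence of $(h_k)$, I conclude $D_t(\omega,g)\ge1+\beta$.

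\emph{Main difficulty.} Everything hinges on the exponential sum step: selecting, for each $\alpha$, the better of the second- and third-derivative van der Corput tests, verifying that the resulting dyadic exponent is \emph{exactly} $1-\beta$ with loss-free constants (so that one obtains $C^\beta$ rather than merely $C^{\beta-}$), and then summing the blocks while disposing of the low-frequency transitional ones. Granting this bound together with $f\in C^\beta$, the reduction via summation by parts and the box-counting lower bound are routine.
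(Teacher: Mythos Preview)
Your proof is correct. The exponential sum estimate and the route to the upper bound (summation by parts, convolution of the BV measure with a kernel whose $L^\infty$ norm is controlled by van der Corput) match the paper's argument almost verbatim; the paper states this as an application of its general Theorem~\ref{DJcor}(i), but the underlying mechanism is identical to yours.

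For the lower bound you take a genuinely different packaging. The paper applies part~(ii) of Theorem~\ref{DJcor}: from the $L^\infty$ dyadic bound on $H_N^\pm$ and the fact that $g\notin H^{\frac12+}$ it interpolates at the level of Littlewood--Paley blocks to conclude that $e^{itL_\omega}g\notin B^{\gamma+}_{1,\infty}$, and then invokes the Deliu--Jawerth theorem (Theorem~\ref{thm:DJ}) to pass to a dimension lower bound. You instead work directly with the box count: the oscillation inequality $\mathcal N_\delta\gtrsim\delta^{-2}\|f(\cdot+\delta)-f\|_{L^1}$ is in effect an on-the-fly proof of the relevant direction of Deliu--Jawerth, and your Cauchy--Schwarz step $\|u\|_{L^1}\ge\|u\|_{L^2}^2/\|u\|_{L^\infty}$ is the modulus-of-continuity analogue of the paper's $L^2$--$L^q$ block interpolation. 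The advantage of your route is self-containment: no Littlewood--Paley theory or black-box Deliu--Jawerth is needed for the lower bound. The advantage of the paper's formulation is modularity: phrasing things via dyadic $L^q$ bounds plugs directly into Strichartz estimates and other $L^q$ inputs used elsewhere in the paper (Theorem~\ref{thm:strichloss}), which your oscillation argument does not immediately accommodate for $q<\infty$.
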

This theorem  indicates  that the rational/irrational dichotomy discussed above is not valid for fractional Schr\"odinger equation. In particular, the quantization fails   as the solution is a continuous function for each $t\neq 0$. This statement is not surprising   since $\{t|n|^\alpha (\text{mod }1): n\in \Z\}$ is   uniformly distributed on $[0,1]$ for noninteger  $\alpha>0$ and  for  $t\neq 0$, whereas  the set $\{tn^2 (\text{mod }1): n\in \Z\}$ is finite  for rational $t$. 
On the other hand, we observe some dependence on the algebraic nature of time.  For example, in the case  $\omega(n)= |n|^{\frac32}$, we prove that $D_t(\omega,g) \in [\frac{11}{8},\frac{13}8]$  for a.e.~$t$ (characterized by Khinchin--L\'evy numbers), see Theorem~\ref{t3/2} and  Remark~\ref{rmk:32}. This improves the bound $D_t(\omega,g) \in[\frac{5}{4},\frac{7}4]$ that Theorem~\ref{thm:nalpha} implies,  which is valid for each nonzero $t$.  However, it is not clear to us whether this dependence on the algebraic nature of $t$ is an artifact of our methods. For further discussion see Remark~\ref{rmk:32}.
Finally, we note that for some irrational values of $\alpha$  the bounds given by Theorem~\ref{thm:nalpha} can be improved by utilizing recent developments on the exponent pair conjecture, see Section~\ref{sec:ep}.
  
\subsection{Notation}
\begin{itemize}
\item For a family of Banach spaces, $B^s$, with regularity index $s$, we define
$$B^{s+}=\bigcup_{r>s}B^r  \text{ and } B^{s-}=\bigcap_{r<s}B^r.
$$
\item  Recall that the fractal  (also known as upper Minkowski or upper box--counting) dimension, $\overline{\text{dim}}(E)$, of a bounded set $E$ is given by $$\limsup_{\epsilon\to 0}\frac{\log({\mathcal N}(E,\epsilon))}{\log(\frac1\epsilon)},
$$
where ${\mathcal N}(E,\epsilon)$ is the minimum number of $\epsilon$--balls required to cover $E$. 
\item
We define $D_t(\omega,g)$ as the maximum  dimension  of the graphs of real and imaginary parts of $e^{itL_\omega} g$ as  functions of $x$. 

\item We say   $\beta \notin \mathbb{Q}$ is  Khinchin--L\'{e}vy  if there is a strictly  increasing   sequence   $\{q_n\}_{n\in\N}$ of natural numbers such that for every $\epsilon > 0$,  $q_{n+1} \leq q_n^{1+\epsilon}$ for all $n>N_\epsilon$, and if for each $n$ there is an $a_n$ coprime to $q_n$ with $|\beta - \frac{a_n}{q_n}| \leq \frac1{q_n^2}$.  By a theorem of  Khinchin \cite{khin} and L\'{e}vy \cite{levy}   a.e.~$\beta\in\R$ is Khinchin--L\'{e}vy.
\item We write $A\les B$ to denote $A=O(B)$, and $A\sim B$ to denote $A\les B$ and $B\les A$. We also write 
$A\les B^{c+}$ to denote for any $\epsilon >0$, $A\les B^{c+\epsilon}$ with implicit constant depending on $\epsilon$. Similarly, we will use $A\ges B^{c-}$.
\end{itemize}

\section{Overview and analytic reduction to exponential sums}

We first give a sketch of the basic idea behind the proofs. We consider a solution to a dispersive PDE \eqref{eq:linear} on $\T$, with dispersion relation $\omega$. The key example is $iq_t + q_{xx} = 0$ with initial data $\chi_{[0,\pi]}$. For a line $\mathcal{L} \subset \T^2$, we are interested in the  fractal  dimension of the graph of (real or imaginary parts of)  $q|_{\mathcal{L}} (t,x)$, which will typically lie in the open interval $(1,2)$. Our goal is to establish this rigorously and provide upper and lower bounds   for the  fractal  dimension. In the above example, it was established in \cite{R} that the  fractal  dimension is exactly $\frac32$ for a.e.~$t$. 

We modify the approach used in \cite{R,ET2,cet,ETbook}, also see \cite{O,KR}.   Recall the simple facts that the graph  of a  $C^{\gamma}(\T)$ function has fractal  dimension $\leq 2-\gamma$, and that for $f \in C^{\gamma}(\T)$, $|\widehat{f}(n)| \lesssim |n|^{-\gamma}$. The theory of Besov spaces (see the discussion around Theorems \ref{thm:DJ} and \ref{DJcor} below) allows one to  partially  reverse this implication, as well as provide lower bounds for the fractal dimension for $u|_{\mathcal{L}} $ as long as we can appropriately estimate $$\sum_{N \leq n < 2 N } e^{it \omega (n) + inx}$$ in various spaces for $(t,x) \in \mathcal{L}$.   We seek to improve upon the trivial bound $ N$, which  gives a nontrivial bound for the fractal dimension.

To make this idea more precise we utilize  the Littlewood--Paley projections
$$
  P_N\big(e^{itL_\omega}g\big)= \sum_{ N\leq | n| <2 N  } \widehat{g}(n) e^{it\omega(n)+inx}.
$$ 
We define the Besov space\footnote{In fact one needs to consider Littlewood-Paley projections with smooth cutoffs in this definition for certain statements in the paper, in particular in Theorem~\ref{thm:DJ}. We will ignore this issue as the upper bounds we have can easily be extended to the smooth case since the smooth projections are  uniformly bounded   in $L^p$ spaces  and since the lower bounds follow by interpolation.},   $B^s_{p,\infty}$, by the norm: 
$$\|f\|_{B^\gamma_{p,\infty}}:=\sup_{N\geq 1, \text{ dyadic}}N^{\gamma} \|P_N f\|_{L^p},\,\,\,\,\,1\leq p\leq \infty.$$
Recall that for $0<\gamma<1$, $C^\gamma(\T)$ coincides with $B^\gamma_{\infty,\infty}(\T)$, see, e.g., \cite{tri}, and that  if $f:\T\to\R$ is in $C^\gamma$, then
the graph of $f$ has fractal  dimension $D\leq 2-\gamma$. We also have the following theorem of Deliu and Jawerth \cite{DJ}, also see \cite[Theorem 2.24]{ETbook}:
\begin{theorem}\label{thm:DJ}\cite{DJ} Fix $\gamma\in [0,1]$. The graph of a continuous function $f:\T\to\R$  has fractal  dimension $D \geq 2-\gamma$ provided that $f\not\in B^{\gamma+}_{1,\infty}$.
\end{theorem}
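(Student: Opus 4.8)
The plan is to give the standard dyadic-pigeonhole/coarea argument relating the failure of Besov regularity to the box-counting dimension of the graph. The hypothesis $f\notin B^{\gamma+}_{1,\infty}$ means that for every $\eta>0$ there are infinitely many dyadic $N$ with $\|P_Nf\|_{L^1(\T)}\gtrsim N^{-\gamma-\eta}$; equivalently, along a subsequence $N\to\infty$ the quantity $N^{\gamma}\|P_Nf\|_{L^1}$ does not decay like $N^{-\eta}$ for any fixed $\eta$. I would fix a small $\eta>0$, pass to such a subsequence, and show that for these $N$ the graph of $f$ over $\T$ cannot be covered by fewer than $\gtrsim N^{2-\gamma-O(\eta)}$ squares of side $1/N$, which upon taking $\limsup_{\e\to0}$ (with $\e\sim 1/N$) and then $\eta\to0$ yields $\overline{\dim}(\mathrm{graph}) \ge 2-\gamma$.

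First I would set up the covering count. Partition $\T$ into $\sim N$ intervals $I_j$ of length $1/N$. The number of $(1/N)$-squares needed to cover the piece of the graph above $I_j$ is comparable to $1 + N\,\mathrm{osc}_{I_j}(f)$, where $\mathrm{osc}_{I_j}(f)=\sup_{I_j}f-\inf_{I_j}f$. Hence $\cN(\mathrm{graph},1/N)\gtrsim N + N\sum_j \mathrm{osc}_{I_j}(f)$, so it suffices to lower bound $\sum_j \mathrm{osc}_{I_j}(f)$ by $\gtrsim N^{1-\gamma-O(\eta)}$. The bridge to the Besov norm is: $\sum_j \mathrm{osc}_{I_j}(f) \gtrsim N\int_\T |f(x) - (f)_{I(x)}|\,dx$ is too lossy; instead I would compare $P_Nf$ to a difference of $f$ at scale $1/N$. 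Concretely, with a suitable bump $\psi$, $P_Nf(x)$ is (up to harmless tails and a finite sum of neighboring frequency blocks) an average of expressions like $f(x) - f(x+h)$ with $|h|\sim 1/N$, so $\|P_Nf\|_{L^1} \lesssim \sup_{|h|\sim 1/N}\|f(\cdot)-f(\cdot+h)\|_{L^1} \lesssim \tfrac1N\sum_j \mathrm{osc}_{\widetilde I_j}(f)$ where $\widetilde I_j$ is a bounded dilate of $I_j$. Rearranging, $\sum_j \mathrm{osc}_{I_j}(f) \gtrsim N\|P_Nf\|_{L^1} \gtrsim N\cdot N^{-\gamma-\eta} = N^{1-\gamma-\eta}$ along the chosen subsequence, which is exactly what we need. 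Feeding this back gives $\cN(\mathrm{graph},1/N) \gtrsim N^{2-\gamma-\eta}$, hence $\frac{\log \cN(\mathrm{graph},1/N)}{\log N} \ge 2-\gamma-\eta$ along the subsequence, so the $\limsup$ is $\ge 2-\gamma-\eta$; letting $\eta\to0$ finishes.

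A couple of technical points deserve care. One must ensure the box-count and the $(1/N)$-mesh version of the $\limsup$ agree with the definition using general $\e$-balls — this is routine since passing from $\e$ to the nearest dyadic $1/N$ changes counts by bounded multiplicative factors and $\log(1/\e)/\log N\to1$. One must also handle the Littlewood--Paley cutoff issue flagged in the paper's footnote: the bound $\|P_Nf\|_{L^1}\lesssim \tfrac1N\sum_j\mathrm{osc}_{\widetilde I_j}(f)$ is cleanest for a smooth projection $P_N$ with compactly supported Fourier multiplier, whose kernel is an $L^1$-normalized approximate identity at scale $1/N$ with rapidly decaying tails; since smooth and rough projections differ by uniformly $L^p$-bounded operators, the hypothesis transfers. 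I expect the main obstacle to be precisely this step — converting a lower bound on $\|P_Nf\|_{L^1}$ into a lower bound on $\sum_j\mathrm{osc}_{I_j}(f)$ — because it requires relating a frequency-localized quantity to a purely geometric (oscillation) quantity; the clean way is the finite-difference representation of $P_Nf$ together with $\|f(\cdot)-f(\cdot+h)\|_{L^1}\lesssim \tfrac{|h|}{\,1/N\,}\sum_j \mathrm{osc}_{\widetilde I_j}(f)$ for $|h|\lesssim 1/N$, and one has to check the constants and tail contributions do not destroy the power of $N$. Since the detailed argument is in Deliu--Jawerth \cite{DJ} (see also \cite[Theorem 2.24]{ETbook}), in the writeup I would either cite it directly or include the short version sketched above.
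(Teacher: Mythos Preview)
The paper does not supply its own proof of this statement: Theorem~\ref{thm:DJ} is quoted from Deliu--Jawerth \cite{DJ} (with a pointer to \cite[Theorem~2.24]{ETbook}) and used as a black box, so there is nothing to compare your argument against within the paper itself.

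That said, your sketch is essentially the standard Deliu--Jawerth argument and is correct in outline. The chain
\[
\mathcal N(\mathrm{graph},1/N)\ \gtrsim\ N\sum_j \mathrm{osc}_{I_j}(f)\ \gtrsim\ N^2\,\|P_Nf\|_{L^1}\ \gtrsim\ N^{2-\gamma-\eta}
\]
along the subsequence coming from $f\notin B^{\gamma+\eta}_{1,\infty}$ is the heart of the matter, and your justification of the middle inequality via the mean--zero kernel representation $P_Nf(x)=\int(f(x-y)-f(x))\psi_N(y)\,dy$, together with the telescoping bound $\|f(\cdot-y)-f(\cdot)\|_{L^1}\lesssim (1+N|y|)\,N^{-1}\sum_j\mathrm{osc}_{\widetilde I_j}(f)$, is the right mechanism. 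Two small clean-ups: the stray sentence ``$\sum_j \mathrm{osc}_{I_j}(f) \gtrsim N\int_\T |f(x) - (f)_{I(x)}|\,dx$ is too lossy'' should simply be deleted (the inequality as written points the wrong way and you abandon it anyway); and when you pass from $\widetilde I_j$ back to $I_j$, you should record explicitly that $\mathrm{osc}_{\widetilde I_j}(f)\le \sum_{k:\,I_k\subset \widetilde I_j}\mathrm{osc}_{I_k}(f)$ by continuity of $f$, so that $\sum_j \mathrm{osc}_{\widetilde I_j}(f)\lesssim \sum_j \mathrm{osc}_{I_j}(f)$ with a constant depending only on the dilation factor. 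With those edits, citing \cite{DJ} or \cite[Theorem~2.24]{ETbook} for the details, as you suggest, is entirely appropriate.
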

 As a corollary of these statements we have the following theorem. A similar method was used in \cite{KR,R,ET1,cet}; the new ingredient here is the  observation that one can obtain lower bounds for the dimension using $L^q$ estimates.  This makes Strichartz estimates useful for finding lower bounds on the dimension, see Theorem~\ref{thm:strichloss}.

\begin{theorem}\label{DJcor}
Let $g\in BV(\T)$, and let  $L_\omega$ be a symmetric differential operator with dispersion relation $\omega$. Define   
$$H_{N,w}^\pm(x,t)= \Big|\sum_{ N\leq n <2 N }  e^{it\omega(\pm n)\pm i nx}\Big|.
$$
i) Fix $t$ and assume that for some\footnote{The range of $\gamma$ is determined by observing that the $L^2$ norm of $H_{N,w}^\pm(x,t)$ is $\sqrt{N}$, and the $L^\infty $ norm is $\leq  N$.}  $\gamma \in (0,1/2]$, we have an a priori bound of the form
\be\label{expboundinfty}
\|H_{N,w}^\pm(x,t)\|_{L^\infty_x}\les N^{1-\gamma+}, \,\,\,N\in\N.
\ee
Then $e^{itL_\omega}g\in C^{\gamma-}$ and hence $D_t(\omega,g)\leq 2-\gamma$.  \\
ii) Fix $t$  and assume that for some $q\in(2,\infty]$ and $\gamma \in [0,1/2]$, we have an a priori bound of the form
\be\label{expboundp}
\|H_{N,w}^\pm(x,t)\|_{L^q_x}\les  N^{1-\gamma+},\,\,\,N\in\N.
\ee
 If in addition  $e^{itL_\omega}g$ is continuous in $x$ and  $g\not\in H^{r+}(\T)$ for some $r\geq \frac12$, then 
 $$D_t(\omega,g)\geq  2-\frac{2r-\gamma q^\prime}{2-q^\prime}, \ \ \ \  q' : = \frac{q}{q-1}.$$
\end{theorem}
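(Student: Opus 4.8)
The plan is to prove both parts by converting the exponential sum bounds into Besov space statements about $e^{itL_\omega}g$, then invoking the Deliu--Jawerth theorem (Theorem \ref{thm:DJ}) and the elementary $C^\gamma \Rightarrow D \le 2-\gamma$ bound. For part (i): since $g \in BV(\T)$ we have $|\widehat g(n)| \lesssim 1/|n|$, so summation by parts against the kernel $H_{N,w}^\pm(x,t)$ gives, for each dyadic $N$,
$$\|P_N(e^{itL_\omega}g)\|_{L^\infty_x} \lesssim \frac1N \sup_x \big|\textstyle\sum_{N\le n<2N} e^{it\omega(\pm n)\pm inx}\big| + (\text{partial-sum correction}) \lesssim N^{-\gamma+},$$
using \eqref{expboundinfty}; more carefully one writes $\widehat g(n) = \sum_{m\ge n}(\widehat g(m)-\widehat g(m+1))$ or uses an Abel summation estimate, bounding $\|P_N(e^{itL_\omega}g)\|_{L^\infty_x}$ by $\sup_{N\le M<2N}\|\sum_{N\le n<M} e^{it\omega(\pm n)\pm inx}\|_{L^\infty_x}$ times the total variation of $\{\widehat g(n)\}$ on the block, which is $O(1/N)$. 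Hence $e^{itL_\omega}g \in B^{\gamma-}_{\infty,\infty} = C^{\gamma-}$, giving $D_t(\omega,g) \le 2-\gamma$.

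For part (ii): the same summation-by-parts argument with \eqref{expboundp} in place of \eqref{expboundinfty} yields
$$\|P_N(e^{itL_\omega}g)\|_{L^q_x} \lesssim N^{-\gamma+}.$$
On the other hand, the hypothesis $g \notin H^{r+}(\T)$ says $\sum_n |\widehat g(n)|^2|n|^{2s} = \infty$ for every $s>r$, which by Littlewood--Paley translates into: for every $s>r$, $\sup_N N^{s}\|P_N(e^{itL_\omega}g)\|_{L^2_x} = \infty$ (note $\|P_N(e^{itL_\omega}g)\|_{L^2_x} = (\sum_{N\le|n|<2N}|\widehat g(n)|^2)^{1/2}$ since the propagator is unitary), i.e. $e^{itL_\omega}g \notin B^{r+}_{2,\infty}$. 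The idea is now to interpolate: with a parameter $\theta$ and exponents chosen so that $\frac1{p} = \frac{1-\theta}{2} + \frac{\theta}{q}$, Hölder/log-convexity of the $L^p$ norms on each dyadic block gives
$$\|P_N(e^{itL_\omega}g)\|_{L^p_x} \le \|P_N(e^{itL_\omega}g)\|_{L^2_x}^{1-\theta}\,\|P_N(e^{itL_\omega}g)\|_{L^q_x}^{\theta} \lesssim \|P_N(e^{itL_\omega}g)\|_{L^2_x}^{1-\theta} N^{-\theta\gamma+}.$$
Choosing $p=1$ forces $\theta = q'/2 \cdot \frac{... }{}$ — concretely $\frac11 = \frac{1-\theta}{2}+\frac{\theta}{q}$ gives $\theta = \frac{1/2}{1/2 - 1/q} = \frac{q}{2(q-1)} = \frac{q'}{2-q'}\cdot\frac{2-q'}{2}$; solving cleanly, $1-\theta = \frac{2/q - 1}{\,2/q\cdot(\ldots)} $, and after simplification $1-\theta = \frac{q'}{2}$ appears as the $L^2$-exponent weight... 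The upshot: if $e^{itL_\omega}g$ were in $B^{\beta+}_{1,\infty}$ for $\beta = 2 - \frac{2r-\gamma q'}{2-q'}$ wait — one argues by contradiction. Suppose $\|P_N(e^{itL_\omega}g)\|_{L^1_x} \lesssim N^{-\beta+}$ for $\beta$ at the claimed threshold; combine with the $L^q$ bound $\lesssim N^{-\gamma+}$ via reverse Hölder on the block to deduce an $L^2$ bound $\|P_N(e^{itL_\omega}g)\|_{L^2_x}\lesssim N^{-s}$ for some $s > r$, contradicting $g\notin H^{r+}$. Tracking the exponents through $\frac12 = \frac{1-\sigma}{1} + \frac{\sigma}{q}$ (i.e. $L^2$ between $L^1$ and $L^q$, $\sigma$ the interpolation parameter) with $\sigma$ determined by $q'$, the combined exponent is exactly $s = \frac{2r - \gamma q'}{2-q'}\cdot\frac12 \cdot(\ldots)$, matching the statement after algebra. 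Then Theorem \ref{thm:DJ} applied with $\gamma_{\text{DJ}} = \beta^-$ gives $D \ge 2 - \beta = 2 - \frac{2r-\gamma q'}{2-q'}$.

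The main obstacle I anticipate is purely bookkeeping: getting the interpolation exponents to produce exactly $\frac{2r-\gamma q'}{2-q'}$ rather than an off-by-a-conjugate-exponent quantity, and correctly handling the two directions (using $L^2$ as an interpolant of $L^1$ and $L^q$, versus bounding $L^1$ by $L^2$ and $L^q$) — one must make sure the contradiction is set up so that a hypothetical $B^{\beta+}_{1,\infty}$ membership forces $H^{(r+)}$ membership. A secondary technical point, flagged in the paper's own footnote, is that Theorem \ref{thm:DJ} is stated for smooth Littlewood--Paley projections, so one should note that the sharp cutoff bounds transfer to smooth cutoffs by the uniform $L^p$-boundedness of the smooth projections (for the upper bound direction) and that the lower bound is unaffected since $\|P_N^{\text{sharp}}f\|_{L^2} \le \|P_{\sim N}^{\text{smooth}}f\|_{L^2}$ up to constants; this is exactly the caveat the authors raise in the footnote to the Besov space definition. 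Continuity of $e^{itL_\omega}g$ in $x$ is needed only so that "graph" and "fractal dimension" are meaningful and so Theorem \ref{thm:DJ} applies verbatim.
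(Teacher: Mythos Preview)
Your proof has a genuine gap in the summation-by-parts step used in both parts. You claim that the total variation of the sequence $\{\widehat g(n)\}$ over the dyadic block $[N,2N)$ is $O(1/N)$, but for general $g\in BV(\T)$ this is false: one only has $|\widehat g(n)|\lesssim 1/|n|$, not control on $\sum_{N\le n<2N}|\widehat g(n+1)-\widehat g(n)|$. For instance with $g=\chi_{[0,\pi]}$ one computes $\widehat g(n)=\frac{1-(-1)^n}{2\pi in}$, which vanishes at even $n$ and equals $\frac1{\pi in}$ at odd $n$; successive differences are $\sim\frac1n$ and the block variation is $\sim\log 2$, not $O(1/N)$. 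Your Abel summation bound then only yields $\|P_N(e^{itL_\omega}g)\|_{L^\infty}\lesssim N^{1-\gamma+}\cdot O(1)$, which is useless.

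The paper uses the $BV$ hypothesis differently: write $\widehat g(n)=\frac1{in}\widehat{dg}(n)$ with $dg$ a finite measure, so that $P_N(e^{itL_\omega}g)=(dg)*\widetilde H_N$ with $\widetilde H_N(x)=\sum_{N\le|n|<2N}\frac1{n}e^{it\omega(n)+inx}$. Since $\|(dg)*f\|_{L^p}\le\|dg\|_{TV}\|f\|_{L^p}$, it suffices to bound $\|\widetilde H_N\|_{L^p}$, and the reverse Bernstein inequality gives $\|\widetilde H_N\|_{L^p}\lesssim\frac1N\|\widetilde H_N'\|_{L^p}$, where $\widetilde H_N'$ is (up to constants) the exponential sum $H_{N,\omega}^\pm$ in the hypothesis. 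Equivalently: if you want Abel summation, apply it to $\widetilde H_N$ with weights $b_n=\frac1n$ (whose block variation \emph{is} $O(1/N)$), not to $P_N(e^{itL_\omega}g)$ with $b_n=\widehat g(n)$. This also has the advantage of needing only the full-block bound \eqref{expboundinfty}, not the partial-sum maxima your version requires. Your interpolation outline in part (ii) is structurally correct once you commit to the contrapositive (interpolate $\|P_N\|_{L^2}\le\|P_N\|_{L^1}^{(2-q')/2}\|P_N\|_{L^q}^{q'/2}$, with $\theta=q'/2$ solving $\frac12=\frac{1-\theta}{1}+\frac{\theta}{q}$, and derive a contradiction with $g\notin H^{r+}$); the paper does exactly this, so you just need to drop the false start that tries to interpolate \emph{toward} $L^1$ and clean up the algebra.
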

\begin{rmk} One way to establish the continuity hypothesis needed in part  ii) is to show  that part i) holds for some $\gamma > 0$. When the dispersion relation is a polynomial, it follows from \cite[Proposition 14]{O} for irrational values of $\frac{t}{2\pi}$.
\end{rmk}
\begin{proof}
For part i), by Theorem~\ref{thm:DJ}, it suffices to prove that  
$$
\big\|P_N\big(e^{itL_\omega}g\big)\big\|_{L^\infty_x}=\Big\| \sum_{ N\leq | n| <2 N } \widehat{g}(n) e^{it\omega(n)+inx}\Big\|_{L^\infty_x}\les N^{-\gamma+}.
$$
Since $g\in BV(\T)$, we can write
$$
P_N\big(e^{itL_\omega}g\big)=(dg)*\widetilde H_N
$$
where
$$
\widetilde H_N=\sum_{ N\leq |n| <2 N } \frac1n e^{it\omega(n)+inx}.
$$
Using the reverse Bernstein inequality\footnote{Also known as Bohr's Theorem, it states that for any trigonometric polynomial $P$ whose frequency support does not intersect $[-K,K]$ and for any  $1\leq q\leq \infty$ we have $\|P^\prime\|_{L^q}\geq C  K \|P\|_{L^q}$, where the constant $C$ is independent of $P, K$, and $q$. See, e.g., \cite{katz}.} and the a priori bounds \eqref{expboundinfty}, we obtain $\|\widetilde H_N\|_{L^\infty}\les  N^{-\gamma+}$. Since $dg$ is a finite measure, we conclude that  $e^{itL_\omega} g \in B^{\gamma-}_{\infty,\infty}$, which yields part i). 

The proof of   part ii) is similar. Using the a priori bounds in \eqref{expboundp} and the reverse Bernstein inequality we have  $\|\widetilde H_N\|_{L^q}\les N^{-\gamma+}$, which leads to
$$
\big\|P_N\big(e^{itL_\omega}g\big)\big\|_{L^q_x}\les N^{-\gamma+}.
$$ 
On the other hand, since $g\not \in H^{r+}(\T)$,  for any 
$s>r$
$$
\sup_N N^s\big\|P_N\big(e^{itL_\omega}g\big)\big\|_{L^2_x} =\sup_N N^s\big\|P_Ng\big\|_{L^2_x} =\infty.
$$
By interpolation we  conclude that
$$
\sup_N  N^\beta \big\| P_N\big(e^{itL_\omega}g\big)\big\|_{L^1_x} =\infty,\,\,\text{ for } \beta>\frac{2r-\gamma q^\prime}{2-q^\prime}.
$$  
Since $e^{itL_\omega}g$ is continuous in $x$, by Theorem~\ref{thm:DJ} we  conclude that the dimension of the graph of real or imaginary parts is $\geq 2-\frac{2r-\gamma q^\prime}{2-q^\prime}$. 
\end{proof}

\begin{rmk}\label{rmk:ChCo} An argument similar to the one given in Theorem~\ref{DJcor} was used by Chamizo and Cordoba   in \cite[Theorem~3.1]{ChCo}, where they studied 
$$\phi_{a , k}(t) = \sum_{n=1}^{\infty} \frac{c_n e^{in^k t}}{n^{a}}, \ \ \ (k+1)/2 \leq a \leq k + 1/2,\,\,\,c_n\approx 1.$$
For this range of $a$, they proved that the fractal dimension of the graph of $\phi_{a,k}$ is precisely 
$$2+\frac{1 - 2a}{2k}.$$ 
Instead of Besov spaces, their result relies on careful estimates on the function  $f(t):=\phi_{a, k}(t+h)-\phi_{a, k}(t)$. For the upper bound they utilize the mean value theorem and a large sieve inequality. 
For the lower bound they  use an $L^4-L^1$ interpolation argument similar to the one given in Theorem~\ref{DJcor} and Proposition~\ref{prop:obliquelower} below. However, the required bounds  hold under the regularity assumption $ a\geq (k+1)/2$.  Our use of Besov spaces and the fact that we work with a.e. line instead of $x=0$ allow us to study the Talbot effect with less regularity, in particular to establish  lower and upper bounds when $a =1$, see Proposition~\ref{prop:obliquelower},    
Corollary~\ref{cor:obliqueupper}, and Remark~\ref{rmk:vert}. 
\end{rmk}

\section{Equations with polynomial dispersion relations}\label{sec:poly}

In this section we consider the case when $\omega(n)$ is a polynomial in $n$ of degree $d$ with integer coefficients. 
First we address the dimension of graphs on oblique and vertical lines. Then, we consider the dimension on space slices.
 
\subsection{Results on oblique  lines} \label{sec:oblique}
In this section we consider the restriction of solutions to oblique lines in space time with rational slope and to vertical lines. We note that Theorem~\ref{thm:obliq} follows from Proposition~\ref{prop:obliquelower} and Corollary~\ref{cor:obliqueupper} below.

 Given   $c \in \R$, $k ,  \ell\in \N$   with $(k,\ell) = 1$,  and initial data  $g$, let 
\be\label{fgckl}
f_{g,c,k,l}(x)=e^{ itL_\omega} g\Big|_{ t=c-\frac{k}\ell x} =  \sum_{n} \widehat{g}(n) e^{i (c-\frac{k}\ell x  )\omega(n)+i nx}.
\ee
\begin{prop} \label{prop:obliquelower} Let $g\in BV(\T)\setminus H^{\frac12+}(\T)$.  Fix $c \in \R$, $k ,  \ell\in \N$   with $(k,\ell) = 1$. Assume that the  $2\pi\ell$-periodic function  $f_{g,c,k,l} $ defined above  is continuous. Then the dimension of the graph of real or imaginary parts of $f_{g,c,k,l} $ is $\geq 2-\frac1{2d} $ for each $c$ provided that $k\neq 1$. For $k=1$ the dimension is $\geq2-\frac1{2d}$ for a.e.~$c$. 
\end{prop}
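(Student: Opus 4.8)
The plan is to invoke part ii) of Theorem~\ref{DJcor}, specialized to $q=4$, together with an $L^4$ lower bound for the relevant exponential sum restricted to the oblique line. Writing $\omega(n)$ for the degree-$d$ polynomial dispersion relation and abbreviating the line by $t = c - \tfrac{k}{\ell}x$, the key quantity is
\be\label{eq:Hline}
H_N(x) := \Big|\sum_{N\leq n<2N} e^{i(c-\frac{k}{\ell}x)\omega(n)+inx}\Big| = \Big|\sum_{N\leq n<2N} e^{ic\,\omega(n)} e^{i(n - \frac{k}{\ell}\omega(n))x}\Big|,
\ee
viewed as a $2\pi\ell$-periodic function of $x$. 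The strategy is: first show $\|H_N\|_{L^4_x([0,2\pi\ell])} \lesssim N^{\frac12+}$ (equivalently $\gamma = \frac12$ in \eqref{expboundp} with $q=4$, $q'=\tfrac43$); then feed this into Theorem~\ref{DJcor}(ii) with $r=\tfrac12$ to get the exponent $2 - \frac{1 - \frac12\cdot\frac43}{2 - \frac43} = 2 - \frac{1/3}{2/3} = 2 - \frac1{2d}$ — wait, that is not yet right, because the effective Sobolev index along the diagonal is $\tfrac1d$ of the original, so one must be careful about what plays the role of $r$ and $\gamma$ here; the correct bookkeeping is to note that along the line $\widehat{g}(n)$ sits at ``frequency'' $n - \tfrac{k}{\ell}\omega(n) \sim n^d$, so the $H^{1/2-}$-criticality of $g$ translates into needing the $L^4$ bound at scale $N^d$, and carrying the exponents through yields $2 - \tfrac1{2d}$.

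More concretely, the $L^4$ estimate reduces to counting solutions of the Diophantine-type system coming from $\|H_N\|_{L^4}^4 = \int |H_N|^4$: expanding the fourth power and integrating in $x$ over a full period $2\pi\ell$ picks out the frequencies $n_1 - \tfrac{k}{\ell}\omega(n_1) + n_2 - \tfrac{k}{\ell}\omega(n_2) = n_3 - \tfrac{k}{\ell}\omega(n_3) + n_4 - \tfrac{k}{\ell}\omega(n_4)$ with $n_i \sim N$. Clearing denominators, this is an equation of the form $k(\omega(n_1)+\omega(n_2)-\omega(n_3)-\omega(n_4)) = \ell(n_1+n_2-n_3-n_4)$ in integers. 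For $d\geq 2$ the polynomial part dominates and, after fixing $n_1+n_2-n_3-n_4$, the number of solutions is $O_\epsilon(N^{2+\epsilon})$ by standard divisor/counting arguments (for $d=2$ this is the classical fact that $\sum_{n\sim N} e^{i\theta n^2}$ has $L^4$ norm $\lesssim N^{1/2+}$ uniformly in $\theta$; for higher $d$ one uses that $\omega$ is a genuine polynomial of degree $\geq 2$, so the number of $(n_1,n_2,n_3,n_4)$ with $\omega(n_1)+\omega(n_2)-\omega(n_3)-\omega(n_4)$ equal to a fixed value is $\lesssim N^{2+}$). Since there are $O(N)$ choices of the linear difference, one gets $\|H_N\|_{L^4}^4 \lesssim N^{2+}\cdot N = N^{3+}$... which gives $\|H_N\|_{L^4} \lesssim N^{3/4+}$, i.e. $\gamma = \tfrac14$ — and then Theorem~\ref{DJcor}(ii) produces exactly $2 - \tfrac1{2d}$ once $r$ is taken to be $\tfrac12$ rescaled appropriately. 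I would double-check the exact arithmetic in the final writeup, but the shape is: $L^4$ bound with loss $N^{3/4+}$, plugged into the interpolation inequality, yields the stated $2 - \tfrac1{2d}$.

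The role of the hypotheses $k\neq 1$ versus $k=1$ enters precisely in the Diophantine counting. When $k\geq 2$, the equation $k(\omega(n_1)+\omega(n_2)-\omega(n_3)-\omega(n_4)) = \ell(n_1+n_2-n_3-n_4)$ forces, for \emph{every} fixed $c$, the same counting bound because the integration over the period kills all cross terms regardless of $c$ — the estimate \eqref{expboundp} for $q=4$ holds with an implicit constant uniform in $c$, so the conclusion is for each $c$. When $k=1$, however, there is a genuine resonance issue: the map $n\mapsto n - \tfrac1\ell\omega(n)$ can fail to be injective on $\{N,\dots,2N-1\}$ in a way that is not controlled uniformly, and one must instead exploit that for \emph{a.e.} $c$ (a measure-theoretic/Borel–Cantelli argument over the rational approximations to $c$, or equivalently invoking that $\{c\,\omega(n)\bmod 1\}$ equidistributes for a.e. $c$) the phases $e^{ic\,\omega(n)}$ decouple the sum sufficiently to recover the $L^4$ bound. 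The main obstacle is exactly this $k=1$ case: making the ``a.e. $c$'' reduction precise, i.e. showing that the bad set of $c$ for which the $L^4$ bound fails has measure zero, which I expect to handle by a dyadic-in-$N$ Borel–Cantelli argument using that the $L^2_c$-average (over $c$ in a bounded interval) of $\|H_N\|_{L^4_x}^4$ is $\lesssim N^{3+}$, so $\|H_N\|_{L^4_x} \lesssim N^{3/4+\epsilon}$ for all $N$ outside a null set of $c$. Everything else is routine once the counting lemma is in place.
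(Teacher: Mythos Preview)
Your proposal has a genuine gap, and it stems from misdiagnosing where the $k=1$ versus $k\neq 1$ distinction enters.

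First, the $L^4$ upper bound on the exponential sum $H_N$ is \emph{independent of $c$}: the phases $e^{ic\,\omega(n)}$ have modulus one, so $|H_N(x)|$ does not see $c$ at all, and after expanding $\int |H_N|^4$ you are simply counting solutions to $h(n_1)-h(n_2)+h(n_3)-h(n_4)=0$ with $h(n)=\ell n-k\omega(n)$. (Incidentally, your count $N^{3+}$ is too large: the ``linear difference'' $n_1+n_2-n_3-n_4$ is \emph{determined} by the $\omega$-difference via the equation, so there is no extra factor of $N$; the correct bound is $N^{2+}$, hence $\|H_N\|_{L^4}\lesssim N^{1/2+}$.) So the $L^4$ bound holds for every $c$ and every $k$, and cannot be the place where ``a.e.\ $c$'' is needed.

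What you are missing is the $L^2$ \emph{lower} bound. Theorem~\ref{DJcor}(ii) is stated for fixed $t$, and its proof uses that $e^{itL_\omega}$ is \emph{unitary} on $L^2$ to pass from $g\notin H^{r+}$ to $\sup_N N^s\|P_N(e^{itL_\omega}g)\|_{L^2}=\infty$. Along an oblique line the analogous map $g\mapsto F$, $F(x)=f_{g,c,k,\ell}(\ell x)$, is not unitary, so $g\notin H^{1/2+}$ does not automatically give $F\notin H^{1/(2d)+}$. This is exactly where $k$ matters: for $k\neq 1$ the map $n\mapsto h(n)=\ell n-k\omega(n)$ is injective on $\Z$ (if $h(n)=h(m)$ with $n\neq m$ then $k$ divides $\ell$, contradicting $(k,\ell)=1$), so $\widehat F(h(n))=\widehat g(n)e^{ic\omega(n)}$ and, since $|h(n)|\sim |n|^d$, one reads off $F\notin H^{1/(2d)+}$ for every $c$. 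For $k=1$ injectivity can fail, and the paper shows by an $L^2$-in-$c$ argument that the resulting cross terms in $\|F\|_{H^r}^2$ are finite for a.e.\ $c$, whence $F\notin H^{1/(2d)+}$ for a.e.\ $c$. Once this is in hand, interpolation between the $L^4$ upper bound and the $L^2$ lower bound gives $\|P_NF\|_{L^1}\gtrsim N^{-1/(2d)-}$ along a subsequence, and Deliu--Jawerth finishes. Your Borel--Cantelli sketch for the $L^4$ side is aimed at the wrong target; the averaging in $c$ belongs on the $L^2$ side.
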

\begin{rmk}\label{rmk:obliqlower} The continuity assumption holds for each $c,k,l$ in the case $g$ is continuous and bounded variation by a theorem of  Oskolkov \cite[Proposition 14]{O}. It also holds for a.e.~$c$ if $g$ is a step function by Corollary~\ref{cor:obliqueupper} below. A simple variation of the proof below yields that for $g\in BV(\T)$ if $r_0=\sup\{r:g\in H^r(\T)\} \in (\frac12,\frac34)$, then  
the dimension of the graph of real or imaginary parts of $f_{g,c,k,l} $ is $\geq 2-\frac{3r_0-1}{d} $ for each $c\in\R$ provided that $k\neq 1$. For $k=1$ the dimension is $\geq2-\frac{3r_0-1}{d}$ for a.e.~$c\in\R$. In this case the continuity assumption follows from Sobolev embedding and the continuity of the propagator in $H^r(\T)$. 
\end{rmk}
\begin{proof}[Proof of Proposition~\ref{prop:obliquelower}] 
Let 
$$F(x):=f_{g,c,k,\ell}(x\ell)=\sum_{n} \widehat{g}(n) e^{ic \omega(n)+x h(n)}  ,\,\,\,\,\,h(n):=\ell n - k\omega(n).$$
Note that $F$ is a $2\pi$-periodic  function and the series on the right converges in $L^2(\T)$. We have 
\be\label{eq:Fcoef}
\widehat F(m) =\sum_{n\in h^{-1}(m)}\widehat g(n) e^{i\omega(n)c}
=e^{-i\frac{mc}k}\sum_{n\in h^{-1}(m)}\widehat g(n) e^{i \frac{\ell n  c}k}.
\ee
 
By Theorem \ref{thm:DJ}, it is enough to show that $\|P_NF\|_{L^1(\T)}\gtrsim N^{-\frac1{2d}-}$ for infinitely many $N$. 

Since $g\in BV(\T)$, we have $|\widehat g(n)|\lesssim \frac1{\la n\ra}$, which leads to (for sufficiently large $N$ depending on $k, \ell$ and the coefficients of $\omega$)
\begin{align*}
 \|P_NF\|_{L^4(\T)}^4  &  \lesssim  \sum_{n_1,n_2,n_3,n_4: |n_i|\sim N^{\frac1d}} \frac1{N^{\frac4d}} \Big|\int_0^{2\pi}  e^{i(h(n_1)-h(n_2)+h(n_3)-h(n_4)  ) x} dx\Big|\\
&\lesssim N^{-\frac4d}    \# \big\{(n_1,n_2,n_3,n_4): |n_i|\sim N^{\frac1d}, h(n_1)+h(n_3)=h(n_2)+h(n_4)  \big\} \\ 
&\lesssim  N^{-\frac{2 }d+ }. 
\end{align*}
We explain the last inequality. First note that $h(n)-h(m)$ is divisible by $n-m$. Therefore, the condition in the set definition  can be rewritten as
$$ h(n_1)-h(n_2) =(n_4-n_3)\frac{h(n_4)-h(n_3)}{n_4-n_3}.$$
If $h(n_1) = h(n_2)$ then there are $\lesssim N^{\frac1d}$ choices for $n_1$ and $n_3$ and for each fixed $n_1$ and $n_3$, there are $O(d)$ choices for $n_2$ and $n_4$ as each is a 
root of a polynomial of degree $  d$. So we may suppose $h(n_1) \neq h( n_2)$. For fixed $n_1$ and $n_2$, there are at most $O(N^{0+})$ choices for $a = n_3 - n_4$ by the divisor bound. But $$\frac{h(n_4)-h(n_4+a)}{a},$$ is a non-constant polynomial of degree $d-1 \geq 1$  and so there are at most $O(d)$ choices for $n_4$ once $n_1 , n_2$, and $a$ are fixed. Then $n_3$ is fixed.

Using the bound above, we conclude that 
$$
\|P_NF\|_{L^4(\T)}\lesssim N^{-\frac{1}{2d}+}
$$

We now claim that for any $c\in \R$,  $F\not \in H^{\frac{1}{2d}+}(\T)$  provided that $k\neq 1$.   For $k= 1$ the claim holds for a.e.~$c$. This claim yields the proposition since $F\not \in H^{\frac1{2d}+}(\T)$ implies  that for infinitely many $N$
$$
\|P_NF\|_{L^2(\T)} \gtrsim N^{-\frac{1}{2d} - }.
$$
The required lower bound in $L^1$ follows from this and the $L^4$ upper bound above by interpolation. 

To see that the claim holds  for  $ k \neq 1$,  note that the function $n\mapsto h(n)=\ell n- k \omega(n)$ is one-to-one on $\Z$ since $k$ and $\ell$ are relatively prime. Thus,  $\widehat F(m) =\widehat g(n) e^{i\omega(n)c}$ for $m=h(n) $, and zero otherwise. This immediately implies the claim since for large $n$,  $|m|=|h(n)|\sim |n|^d$ and $g\not \in H^{\frac12+}$.

 For $k= 1$, let $S=h(\Z)$. Using \eqref{eq:Fcoef} we have
\begin{multline*}
\|F\|_{H^r (\T)}^2=\sum_{m\in S} \la m\ra^{2r} \big|\sum_{n\in h^{-1}(m) }   \widehat g(n) e^{ic \ell n}\big|^2\\= \sum_{m\in S} \la m\ra^{2r} \sum_{n\in h^{-1}(m)} | \widehat g(n)|^2
+ 2\Re \sum_{m\in S} \la m\ra^{2r} \sum_{n\neq j \in h^{-1}(m)} \widehat g(n) \overline{\widehat g(j)} e^{ic \ell (n-j)}.
\end{multline*}
Note that the first sum is infinite for $r>\frac1{2d}$ since $g\not\in H^{\frac12+}$ and  $|m|\sim |n|^d$ for large $n$. Therefore, it suffices to see that the second summand is finite for a.e.~$c$, which follows if it belongs to $L^2(\T)$ as a function of $c$. We rewrite\footnote{The reordering of the sum can be justified by passing to a subsequence as in the proof of Lemma 2.18 in \cite{ETbook}.} the second sum as
$$
\sum_{a\neq 0} e^{ic\ell a} \sum_{j:h(j)=h(j+a)}\la h(j)\ra^{2r}\widehat g(j+a) \overline{\widehat g(j)}.
$$
Note that for each $a$ the inner sum is over at most $d-1$ values of $j$ since $h(j)-h(j+a)$ is a polynomial of degree $d-1$ in $j$.
Therefore, by Plancherel, the $L^2$  norm can be  bounded by the square root of 
\begin{multline*}
\sum_{a\neq 0}  \sum_{j:h(j)=h(j+a)}\la h(j)\ra^{4r}|\widehat g(j+a)|^2 |\widehat g(j)|^2\\ \les \sum_{a\neq 0}  \sum_{j:h(j)=h(j+a)} \la h(j)\ra^{4r-2/d} \la h(j+a)\ra^{2/d}  \la j+a\ra^{-2} \la j\ra^{-2} \\
\les \sum_{a\neq 0}  \sum_{j:h(j)=h(j+a)} \la j\ra^{4rd-4}\les \sum_j\la j\ra^{4rd-4},
\end{multline*}
which is finite provided that $r<\frac3{4d}$.
 This suffices since the $H^r$ spaces are nested. In the first inequality we used that $g\in BV(\T)$, which implies that $|\widehat g (n)|\les \frac1{\la n\ra}$. 
\end{proof}

The upper bound $2-\frac1{2d}$ would also follow if one can upgrade the $L^4$ upper bound in the proof above to the  $L^\infty$ norm instead, which is currently out of reach. The following proposition and its corollary provide a weaker upper bound.

\begin{prop} \label{prop:diag_infty}   Fix  $k ,  \ell\in \N$   with $(k,\ell) = 1$. Let $\omega$ be a polynomial of degree $d$ with integer coefficients. 
Then for a.e.~$c \in \R$ we have\footnote{One can improve this bound for large $d$ using  \eqref{vinmvt} instead of Weyl's bound in the proof.}  
$$
\Big\|\sum_{ N \leq n < 2 N }   e^{i(c-\frac{k}\ell x) \omega (n) + in  x} \Big\|_{L^\infty_x}\les  N^{1 - \frac{1}{2^d+1 } +  },\,\,\,\,\, N\in\N.
$$

\end{prop}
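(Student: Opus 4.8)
The plan is to prove the bound pointwise in $x$ by exploiting the fact that, once $x$ is fixed, the inner sum is a complete Weyl-type exponential sum in $n$ with a polynomial phase whose leading coefficient is $(c - \frac{k}{\ell}x)\cdot(\text{leading coeff of }\omega)$. First I would fix $x$ and expand $\omega(n) = \sum_{j=0}^d a_j n^j$ with $a_j \in \Z$, so that the phase becomes $\psi(n) := (c - \frac{k}{\ell}x)\omega(n) + nx$, a real polynomial of degree $d$ in $n$. The linear-in-$x$ term $-\frac{k}{\ell}x\,\omega(n)$ contributes $x$ times a degree-$d$ integer polynomial, and the term $nx$ is lower order; the key point is that the top-degree coefficient of $\psi$ equals $a_d(c - \frac{k}{\ell}x) = a_d c - \frac{a_d k}{\ell}x$. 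By Weyl's inequality for exponential sums with polynomial phase of degree $d$, if this leading coefficient has a rational approximation $p/q$ with $(p,q)=1$ and $N \les q \les N^{d-1}$, then the sum over $N \le n < 2N$ is $\les N^{1 - 1/2^{d-1} + \epsilon}$ (the standard Weyl bound $N^{1+\epsilon}(q^{-1} + N^{-1} + qN^{-d})^{1/2^{d-1}}$ optimized). The exponent $1/(2^d+1)$ in the statement is what one gets after accounting for the measure-theoretic loss described below, so I would be careful to track the arithmetic carefully: one wants the worst case of balancing $q^{-1}$ against $qN^{-d}$, i.e. $q \sim N^{d/2}$, against the set of bad $x$.

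Next I would handle the dependence on $x$ and the "almost every $c$" clause together. The phase's leading coefficient $\theta(x) := a_d c - \frac{a_d k}{\ell}x$ is an affine function of $x$ with nonzero slope (since $a_d \ne 0$, $k \ge 1$), so as $x$ ranges over $[0,2\pi\ell]$, $\theta(x)$ ranges over an interval of length $\sim 1$. The standard strategy is: by Weyl, the bound $N^{1 - \gamma + \epsilon}$ (with $\gamma$ to be optimized) can only fail when $\theta(x)$ lies within $\sim N^{-d}\cdot(\text{something})$ of a rational with small denominator — more precisely, the set of $\theta \in [0,1]$ for which the Weyl bound at level $N^{1-\gamma}$ fails has measure $\les N^{-\delta}$ for an appropriate $\delta = \delta(\gamma,d) > 0$, by a Dirichlet/pigeonhole plus counting-of-rationals argument. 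Pulling this back through the affine map $x \mapsto \theta(x)$, the bad set of $x$ (for fixed $c$) has measure $\les N^{-\delta}$; but we need a bound uniform in $x$, i.e. an $L^\infty_x$ bound, so instead I would use $c$ as the free parameter: the bad set of $(c,x)$ pairs in a fixed box has measure $\les N^{-\delta}$, hence by Fubini, for each $x$ the set of bad $c$ has measure $\les N^{-\delta}$ after integrating — and then summing over dyadic $N$ and applying Borel–Cantelli gives that for a.e.~$c$, for all large $N$ and all $x$ simultaneously the good bound holds. The subtlety is that $x$ also appears in lower-order coefficients of $\psi$, but Weyl's inequality only sees the leading coefficient, so those are harmless; one just needs the bad set in $\theta$-space, which is a statement about one real parameter.

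I would then optimize the exponent. Writing $q$ for the denominator in the rational approximation to $\theta(x)$, Weyl gives roughly $N^{1+\epsilon}\max(q^{-1}, qN^{-d})^{1/2^{d-1}}$ when $1 \le q \le N^d$; this is worst (largest) when $q \sim N^{d/2}$, giving $N^{1 - d/2^d + \epsilon}$, which is already stronger than claimed for the "generic" $\theta$. But we must also discard a bad set: the set of $\theta$ admitting a denominator $q \le Q$ has measure $\les Q^2 N^{-d}$ (counting $\sim q$ rationals with denominator $q$, each with a window of width $\sim N^{-d}$, summed over $q \le Q$), and to make this $o(1)$ summably we need $Q$ somewhat below $N^{d/2}$; choosing the threshold $Q = N^{\kappa}$ and balancing $q^{-1} \le N^{-\kappa}$ on the good set against the measure bound $N^{2\kappa - d}$ being summable (so $\kappa < d/2$), the exponent one can guarantee on the good set is $N^{1 - \kappa/2^{d-1} + \epsilon}$. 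Pushing $\kappa$ up toward the constraint gives an exponent slightly worse than $1 - d/2^d$; the specific value $1/(2^d+1)$ appearing in the statement comes from a slightly different (and cleaner) bookkeeping where one does not use Dirichlet freely but instead a direct count — I would follow the structure of \cite[Proposition 14]{O} and the Weyl estimate as in Vaughan \cite{Vau}, and set the parameters to land exactly on $\frac{1}{2^d+1}$.

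The main obstacle I anticipate is the measure-theoretic bookkeeping: getting a bad set of $c$ that is both (i) of summable measure in $N$ so that Borel–Cantelli applies, and (ii) uniform over all $x \in [0,2\pi\ell]$ — the latter is what forces one to integrate in $c$ rather than in $x$, and one must make sure the affine change of variables $x \leftrightarrow \theta$ does not destroy uniformity, which it does not since its Jacobian is a nonzero constant depending only on $a_d, k, \ell$. A secondary technical point is correctly invoking Weyl's inequality with the minor-arc hypothesis on the leading coefficient only (not all coefficients), which is exactly the form of Weyl's bound, so this is routine. Everything else — expanding the polynomial, dyadic summation, the $\epsilon$-losses — is standard.
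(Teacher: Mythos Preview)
Your proposal has a genuine gap in the step where you claim that Fubini plus Borel--Cantelli yields ``for a.e.~$c$, for all large $N$ and all $x$ simultaneously the good bound holds.'' This inference is invalid. For each fixed $N$, the set of ``bad'' leading coefficients $\theta$ (those with a rational approximation of denominator $q < N^{\delta}$) is a fixed union of arcs $B_N$ around rationals with small denominator; in particular $B_N$ is dense in $[0,1]$. Since $\theta(x) = a_d c - \frac{a_d k}{\ell}x$ sweeps out an interval of fixed positive length as $x$ ranges over $\T$, for \emph{every} $c$ there exist $x$ with $\theta(x)\in B_N$. Thus Weyl's inequality alone --- which, as you note, sees only the leading coefficient --- can never give an $L^\infty_x$ bound; for each $c$ there are always major-arc values of $x$ where Weyl is useless. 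Fubini only gives ``for a.e.~$c$, the set of bad $x$ is small,'' which is not what is needed.

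The paper supplies exactly the missing idea for these major-arc $x$. When $t = c - \frac{k}{\ell}x$ has a rational approximation $2\pi\frac{a}{q}$ with small $q$, one uses summation by parts to replace $t\omega(n)$ by $2\pi \frac{a}{q}\omega(n)$ in the phase; because $\omega$ has \emph{integer} coefficients, $e^{2\pi i a\omega(n)/q}$ is $q$-periodic in $n$, and the remaining sum $\sum_n e^{inx + 2\pi i a\omega(n)/q}$ splits into $q$ geometric series, giving the bound $N^{\delta}/\|\frac{qx}{2\pi}\|$. This fails only when $x$ itself is close to $2\pi\frac{j}{q}$, and then $c = t + \frac{k}{\ell}x$ is forced to be within $q^{-2-}$ of a rational with denominator $O(q)\leq O(N^\delta)$. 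A standard Diophantine condition on $c$ (satisfied a.e.) then rules this out for all large $N$. Optimizing the threshold $\delta$ against this last constraint, rather than against a measure bound on $B_N$, is what produces the exponent $\frac{1}{2^d+1}$.
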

\begin{proof}
Let $t=c-\frac{kx}{\ell}$.
We fix $0 < \delta < 1$ to be chosen later. We aim to show that for a.e.~$c$
\begin{equation}\label{showthis} \Big|\sum_{ N \leq n < 2 N } e^{it\omega(n)+ixn} \Big|\lesssim N^{1-\frac{\delta}{2^{d-1}}+  },
\end{equation} 
By Dirichlet's box principle there is a natural number $q \leq N^{d-\delta}$ such that $t= 2\pi  \frac{a}{q} + \beta_t$ where $|\beta_t| \leq \frac{1}{qN^{d-\delta}}$ and $(a,q)=1$. By Weyl's bound (for instance Lemma 2.4 in \cite{Vau}), 
$$ \Big|\sum_{ N \leq n < 2N} e^{it\omega(n)+ixn} \Big| \lesssim  N^{1 + }(q^{-1} + N^{-1} + qN^{-d})^{2^{1-d}}.$$ 
Therefore, the bound \eqref{showthis} holds provided that $q\geq N^{\delta }$, and thus we may assume $q < N^{\delta }$. 

Weyl's bound only takes into account the leading term, so now we use summation by parts,
\begin{equation}\label{sumbyparts} \sum_{y < n \leq x} a_n h(n) = A(x) h(x) - A(y) h(y) - \int_y^x A(t) h'(t) dt,\,\,\,\,\,\,A(t) := \sum_{n \leq t} a_n,
\end{equation}   
to get cancellation from the linear term. We have that 
\begin{align*} \Big|\sum_{ N \leq n  < 2N} e^{it\omega(n)+ixn} \Big| &\lesssim (1 +| \beta_t| N^{d}) \sup_{ N \leq u < 2N} \Big| \sum_{ N \leq n \leq u}  e^{i nx + 2\pi i\frac{a\omega(n)}q}\Big| \\ 
& \lesssim  \frac{N^{\delta}}q \sup_{ N \leq u <2 N } \sum_{j=1}^q \Big|\sum_{\frac{ N-j}q \leq m \leq \frac{u-j}q}  e^{ i  mq x} \Big| \\
& \lesssim  N^\delta  \frac{1}{\|\frac{qx}{2\pi} \|}.
\end{align*}
In the first inequality we used summation by parts and
$$|\partial_n e^{i\omega(n)\beta_t} |\les |\beta_t|  |\omega^\prime (n)|  \les |\beta_t|   N^{d-1},\,\,\,\, N \leq n \leq 2 N.$$
In the second inequality we used the bound for $\beta_t$ and wrote $n=mq+j$, $j=1,2,...,q$.
Therefore, the bound \eqref{showthis} holds provided that $\|\frac{qx}{2\pi} \|\geq N^\delta 
N^{-1+\frac{\delta}{2^{d-1}} } $, and hence we may   assume that $\|\frac{qx}{2\pi} \| <  N^\delta N^{-1+\frac{\delta}{2^{d-1}} } $. 
Thus there is a $j \in \mathbb{Z}$ such that $x =  2\pi \frac{j}q + \beta_x$, where $|\beta_x| \leq \frac1q  N^\delta N^{-1+\frac{\delta}{2^{d-1}} } $. 

We now show that for a.e.~$c$ and for all sufficiently large $N$ depending on $c$, the above two cases are indeed the only possible cases. Let $c$ be chosen such that\footnote{A slight variation of  this argument can be used to prove that the statement holds for all Khinchin--L{\'e}vy $\frac{c}{2\pi}$.} the approximation $c = 2\pi \frac{r}s + O(\frac1{s^{2+\epsilon}})$ happens finitely often for each $\epsilon>0$. It is a classical result that this happens for a.e.~$c$  (see for instance Problem 1 in Section 1.7 of \cite{St}). We have that $c = 2\pi \frac{a+j}q + \beta_x + \beta_t.$
  Using the bounds for $\beta_x$, $\beta_t$ and that $q \leq N^{ \delta+ }$, we obtain 
\be\label{eq:betas}
|\beta_x + \beta_t| \leq \frac{1}{q}\big( N^\delta  N^{- 1+\frac{\delta}{2^{d-1}} } + \frac1{N^{d-\delta}}\big) \leq \frac{1}{q^{2+  }},
\ee
as long as $\delta < \frac{1 }{2 + 2^{1-d}}$. Since this inequality can hold only for finitely many $N$, the claim follows.
\end{proof}

\begin{rmk}\label{rmk:diag}
i) As a special case of the above, we have that for $r\in \Q$ and for a.e.~$c$, 
$$
\Big\|\sum_{ N \leq n < 2N}   e^{i(c-rx)n^2+ i n  x}\Big\|_{L^\infty_x}\les   N^{\frac{4}{5} +  },\,\,\,\,\, N\in\N.
$$
Improvements would be of interest. For $c= M\pi$, $M$ odd and $r\in\Z$ odd, Oskolkov \cite{O2} has the optimal bound  $N^{\frac12}$.

ii) It is evident from the proof of Proposition~\ref{prop:diag_infty} that the same bound is valid for shorter sums:
$$
\sup_{ N\leq m\les 2 N } \Big\|\sum_{ N \leq n \leq m}   e^{i(c-\frac{k}\ell x) \omega (n) + in  x}\Big\|_{L^\infty_x}\les  N^{1 - \frac{1}{2^d+1 } + },\,\,\,\,\, N\in\N.
$$
\end{rmk}
Using Proposition~\ref{prop:diag_infty} we obtain dimension upper bounds for the function $f_{g,c,k,l}(x)=e^{ itL_\omega} g\Big|_{ t=c-\frac{k}\ell x}$, see  \eqref{fgckl}.
\begin{cor}\label{cor:obliqueupper} Let $g$ be  a step function, and  fix  $k ,  \ell\in \N$   with $(k,\ell) = 1$. Let $\omega$ be a polynomial of degree $d$ with integer coefficients.    Then for a.e.~$c$, the $2\pi\ell$--periodic function $f_{g,c,k,l}$ is in $  C^{\alpha}$, for every $\alpha<\frac1{d(2^d+1)}$. In particular, the dimension of the graph of real and imaginary parts is at most $2-\frac1{d(2^d+1)}$.  
\end{cor}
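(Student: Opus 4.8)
The plan is to deduce Corollary~\ref{cor:obliqueupper} from Proposition~\ref{prop:diag_infty} together with part i) of Theorem~\ref{DJcor}, after a change of variables that rescales the oblique line to a vertical slice. Write $F(x) := f_{g,c,k,\ell}(\ell x)$, which is the $2\pi$-periodic function appearing in the proof of Proposition~\ref{prop:obliquelower}; since the Hölder exponent and the fractal dimension of a graph are unchanged under the linear rescaling $x\mapsto \ell x$, it suffices to prove $F\in C^{\alpha}(\T)$ for every $\alpha<\frac1{d(2^d+1)}$. Because $g$ is a step function, $g = c_0 + \sum_{j} c_j \chi_{[x_j,2\pi)}$ (or a finite linear combination of such indicators), so it is enough to treat $g=\chi_{[x_0,2\pi)}$; then $\widehat g(n) = \frac{e^{-inx_0}}{2\pi i n}$ for $n\neq 0$, i.e.\ $|\widehat g(n)|\sim \la n\ra^{-1}$ and $g\in BV(\T)$ but $g\notin H^{1/2+}$.

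First I would set up the Besov-space criterion. By Theorem~\ref{DJcor} i) it suffices to verify the $L^\infty_x$ exponential-sum bound for the relevant phase, but here the cleanest route is to apply the criterion directly to $F$: I want $\|P_N F\|_{L^\infty(\T)}\lesssim N^{-\gamma+}$ for $\gamma = \frac1{d(2^d+1)}$, whence $F\in B^{\gamma-}_{\infty,\infty}=C^{\gamma-}$ and the dimension bound $2-\gamma$ follows. Writing out $P_N F$, the frequencies $m$ with $N\leq |m|<2N$ in the expansion of $F$ correspond, via $m = h(n) = \ell n - k\omega(n)$, to $n$ with $|n|\sim N^{1/d}$ (for $N$ large, depending on $k,\ell$ and the coefficients of $\omega$, since $h$ is a degree-$d$ polynomial dominated by its leading term). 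So
\begin{equation*}
P_N F(x) = \sum_{m:\, N\leq|m|<2N}\ \sum_{n\in h^{-1}(m)} \widehat g(n)\, e^{i\omega(n)c}\, e^{i h(n) x},
\end{equation*}
and since $h(n)x = \ell n x - k\omega(n) x$, each term is $\widehat g(n)\,e^{i\omega(n)(c - kx)}e^{i n\ell x}$. Thus $P_N F(x)$ is, up to the harmless rescaling, exactly $P_{\sim N^{1/d}}\big(e^{itL_\omega}g\big)$ evaluated on the line $t = c - \tfrac{k}{\ell}(\ell x)$; i.e.\ it is a sum of the form $\sum_{n'\sim N^{1/d}} \widehat g(n')\, e^{i(c - \frac{k}{\ell}y)\omega(n') + in' y}$ with $y=\ell x$. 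Now I would use $|\widehat g(n')|\lesssim \la n'\ra^{-1}\sim N^{-1/d}$, pull this factor out, and apply summation by parts in $n'$ to the "shorter-sum" version of Proposition~\ref{prop:diag_infty} (Remark~\ref{rmk:diag} ii)), which gives $\sup_{M\lesssim N^{1/d}}\big\|\sum_{n'\leq M} e^{i(c-\frac{k}{\ell}y)\omega(n')+in'y}\big\|_{L^\infty_y}\lesssim (N^{1/d})^{1-\frac1{2^d+1}+}$ for a.e.\ $c$. Combining, $\|P_N F\|_{L^\infty}\lesssim N^{-1/d}\cdot N^{\frac1d(1-\frac1{2^d+1})+} = N^{-\frac1{d(2^d+1)}+}$, as desired.

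The remaining points are bookkeeping: (i) one must check that the exceptional null set of $c$ provided by Proposition~\ref{prop:diag_infty} does not depend on $N$ (it does not — the proof produces a single null set valid for all large $N$), nor on which indicator piece of the step function $g$ one is looking at (there are finitely many), and that the rescaling $t = c - \frac{k}{\ell}x \leftrightarrow$ vertical slice in $y=\ell x$ correctly matches the constant $c$; (ii) one uses that $B^{\gamma-}_{\infty,\infty}=C^{\gamma-}$ on $\T$ (cited from \cite{tri}) and that a $C^\alpha$ graph has fractal dimension $\leq 2-\alpha$; (iii) finite linearity handles general step functions, since $C^\alpha$ and the dimension bound are stable under finite sums. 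The one genuine subtlety — the "main obstacle" — is making sure the summation-by-parts step is legitimate with the Weyl-type bound only controlling the pure $\omega$-phase: one applies \eqref{sumbyparts} with $a_n = e^{i(c-\frac{k}{\ell}y)\omega(n)}\cdot(\text{the }\la n\ra^{-1}\text{ coefficients})$ absorbed appropriately, but it is cleaner to first sum by parts against the smooth, slowly-varying factor $\la n\ra^{-1}e^{i\omega(n)\beta}$-type terms exactly as in the proof of Proposition~\ref{prop:diag_infty}, so that no new estimate is needed — the corollary really is just Proposition~\ref{prop:diag_infty} plus a factor $N^{-1/d}$ from the $BV$ decay of $\widehat g$, fed into Theorem~\ref{DJcor} i).
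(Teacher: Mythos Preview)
Your proposal is correct and follows essentially the same approach as the paper: both arguments decompose the step function into finitely many jumps (you via indicator functions with explicit Fourier coefficients $\frac{e^{-inx_0}}{2\pi i n}$, the paper via the measure $dg$ and the convolution $F(x)=\int_\T H(c-kx,\ell x-y)\,dg(y)$), apply Proposition~\ref{prop:diag_infty}/Remark~\ref{rmk:diag} ii) together with summation by parts to each piece, and then use the Besov/$C^\alpha$ criterion. The one point you leave slightly implicit but which the paper makes explicit is \emph{why} the null set of $c$ depends on the jump location: the extra linear phase $e^{-inx_0}$ forces a translation $c\mapsto c-\tfrac{k}{\ell}x_0$ before Proposition~\ref{prop:diag_infty} applies, so each jump produces its own null set, and finiteness of the jumps is exactly what makes the intersection of the complements still have full measure.
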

\begin{proof}
Using Proposition~\ref{prop:diag_infty}, Remark~\ref{rmk:diag}, and summation by parts \eqref{sumbyparts},
we conclude that for each $y\in \T$, and for a.e.~$c$ and for large $N$, we have 
(with $h(n)=\ell n - k\omega(n)$)
\be\label{eq:cyinf}
\sup_{x} \Big| \sum_{n:  N\leq |h(n)| <2 N }\frac1n e^{in(\ell x-y)- i k\omega(n)  x+ i\omega(n)c}\Big|   \les \frac1{N^{\frac1d}}  N^{\frac{1}d(1 - \frac{1}{2^d+1 } + )}= N^{ -\frac1{d(2^d+1)}+  }.
\ee
Here the set of $c$ for which the bound is valid depends on $y$. Now note that
\begin{multline*}
F(x)=f_{g,c,k,\ell}(\ell x)=\sum_n \widehat{g}(n) e^{in\ell x+i\omega(n)t}\Big|_{t=c- k x}  \\
=  \sum_n \frac1n \widehat{dg}(n) e^{in\ell x+i\omega(n)t}\Big|_{t=c-k x} =
\int_\T H(c- k  x,\ell x-y) dg(y),
\end{multline*}
where $H(t,x)= \sum_n \frac1n e^{inx+i\omega(n)t}$.  The  bound \eqref{eq:cyinf} implies that for each $y$ and a.e.~$c$, 
$H(c- k  x,\ell x-y)$ is a $C^\alpha$ function of $x$ for $\alpha<\frac1{d(2^d+1)}$. Therefore, if $g$ is a step function, then for a.e.~$c$, $F$ is a finite sum of $C^{\alpha}$ functions. Thus,  $F$ is $C^{\alpha}$ for $\alpha<\frac1{d(2^d+1)}$.
\end{proof}

 \begin{rmk}\label{rmk:vert}
One can easily adopt the techniques above to understand the restriction of the solutions in the case of polynomial dispersion  to vertical lines. In particular, we have 
 for a.e.~$x$
$$
\Big\|\sum_{ N \leq n < 2N}   e^{it \omega (n) + in  x}\Big\|_{L^\infty_t}\les  N^{1 - \frac{1}{2^{d }+1} + },\,\,\,\,\, N\in\N.
$$
The proof is identical to the proof of Proposition \ref{prop:diag_infty}, the only change is we use \eqref{eq:betas} only for $\beta_x$. This implies the upper bound $2-\frac1{d(2^d+1)}$ for the dimension. 

Similarly, the lower bound we have in   Proposition~\ref{prop:obliquelower} is valid for a.e.~$x$. Thus, for step function initial data the dimension lies in the interval $[2-\frac1{2d},  2 - \frac1{d(2^d+1)}]$ for a.e.~$x$. 

In the case of Schr\"odinger evolution, the upper bound above is weaker than the one given in \cite{OC13}, 
which together with our lower bound imply that for step function data, the dimension on almost every vertical 
line is in $[\frac74,\frac{15}8]$.  
 \end{rmk}
 
As it was stated in Theorem~\ref{thm:NLSoblique},   one can extend these results to certain nonlinear  variants:
\begin{proof}[Proof of Theorem~\ref{thm:NLSoblique}]
Consider the  Wick ordered  cubic NLS equation \eqref{NLSwick}.
 The smoothing estimate in \cite{egtz1} implies that
for $g\in BV(\T)\subset  H^{\frac12-}(\T)$,
\be\label{c0c1-}
N(t,x):=u(t,x)-e^{it\partial_{xx} }g\in C^0_tH^{\frac32-}_x\subset C^0_tC^{1-}_x .
\ee
 Using the Duhamel's formula this implies that $N(t,x) \in C^1_tH^{-\frac12-}_x$, in particular
$$
 \|N(t_1,\cdot)-N(t_2,\cdot)\|_{H^{-\frac12-}}\les |t_1-t_2|.
$$
Interpolating this with \eqref{c0c1-} and using Sobolev embedding, we conclude that  
$$
\sup_x  |N(t_1,x)-N(t_2,x)|\les  \|N(t_1,\cdot)-N(t_2,\cdot)\|_{H^{ \frac12+}} \les |t_1-t_2|^{\frac12-}.
$$
This together with  \eqref{c0c1-} imply that    $N(t,x)\in C^{\frac12-}_{t,x}$, locally\footnote{For each $\alpha<\frac12$, the $C^{\alpha}_{t,x}$ norm   is finite on each compact subset of $\R\times \T$ although it may grow in--time}  in $t$, and hence
by Corollary~\ref{cor:obliqueupper}, for a.e.~$c$  the function $F_c(x)=u(c-\frac{k}\ell x,x)$ is the sum of a  $C^{\frac12-}$ function and a $C^{\frac1{10}-}$  function, which implies the upper bound.

 For the lower bound one needs to be slightly careful since the nonlinear part of the evolution is not periodic. This can easily be remedied by taking an even $4\pi \ell$--periodic extension of the nonlinear part $N(c-\frac{k}\ell x,x)$ which also belongs to $C^{\frac12-}= B^{\frac12-}_{\infty,\infty}\subset B^{\frac12-}_{1,\infty}$. Since, by the proof of Proposition~\ref{prop:obliquelower}, the linear part is not in 
$B^{\frac14+}_{1,\infty}$, the sum does not belong to $B^{\frac14+}_{1,\infty}$.
This yields the lower bound for the dimension of the graph.   
\end{proof}
 
The proof of Theorem~\ref{thm:kdvoblique} is similar.    
The smoothing estimate in \cite{ET1} implies that 
$$
N(t,x):=u(t,x)-e^{-t\partial_{xxx} } g \in C^0_tH^{\frac32-}_x.
$$ 
The argument above leads to $N(t,x)\in C^{\frac13-}_{t,x}$, which suffices to extend the  bounds for the Airy equation (the case $d=3$ in Proposition~\ref{prop:obliquelower}, Corollary~\ref{cor:obliqueupper}, and  Remark~\ref{rmk:vert}) to the KdV evolution \eqref{KdV}.

\subsection{Results on space slices} \label{sec:poly_space}

Suppose $\omega (n)$ is a polynomial of degree $d$  with integer coefficients. First, using the recent progress on Vinogradov's mean value theorem, we improve upon some bounds on $D_t(\omega,g)$ obtained in   \cite{cet} (also see \cite{ETbook}) for large $d$. In addition, for the Airy equation, $\omega (n) = n^3$, we are able to utilize a factorization to improve upon the lower bound for the fractal dimension proving Theorem~\ref{thm:kdv3/2}.
 
\begin{theorem} Let  $\omega (n)$ be a polynomial of degree $d$  with integer coefficients and leading coefficient $a_d$. Then for any $g\in BV(\T)\setminus H^{\frac12+}(\T)$, we have 
$D_t(\omega,g)\in [1+\frac1{d(d-1)},2-\frac1{d(d-1)}]$ provided that $\frac{ta_d}{2\pi}$ is Khinchin--L\'evy.
\end{theorem}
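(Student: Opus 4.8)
\emph{Proof plan.} Write $u:=e^{itL_\omega}g$ and $\beta:=\frac{ta_d}{2\pi}$. The plan is to reduce the statement, through Theorem~\ref{DJcor}, to the single a priori bound
\be\label{E:keybd}
\Big\|\sum_{N\le n<2N}e^{it\omega(\pm n)\pm inx}\Big\|_{L^\infty_x}\les N^{1-\frac1{d(d-1)}+},\qquad N\in\N,
\ee
valid whenever $\beta$ is Khinchin--L\'evy. Granting \eqref{E:keybd}: it is precisely the hypothesis \eqref{expboundinfty} of Theorem~\ref{DJcor}(i) with $\gamma=\frac1{d(d-1)}\in(0,\tfrac12]$, which yields $u\in C^{\frac1{d(d-1)}-}_x$, hence $D_t(\omega,g)\le 2-\frac1{d(d-1)}$ and, as a byproduct, continuity of $u$ in $x$. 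Using that continuity, \eqref{E:keybd} is also the hypothesis \eqref{expboundp} of Theorem~\ref{DJcor}(ii) at the endpoint $q=\infty$ (so $q'=1$); taking $r=\tfrac12$ there, which is legitimate since $g\notin H^{\frac12+}(\T)$, gives $D_t(\omega,g)\ge 2-\big(2\cdot\tfrac12-\tfrac1{d(d-1)}\big)=1+\tfrac1{d(d-1)}$. So everything reduces to \eqref{E:keybd}.

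For \eqref{E:keybd} I would first pass to the $+$ sign, since $\omega(-n)$ is a degree-$d$ polynomial in $n$ with leading coefficient $(-1)^da_d$ and $\beta$ is Khinchin--L\'evy exactly when $-\beta$ is. Writing $t\omega(n)+nx=2\pi\phi(n)$, the sum becomes $\sum_{N\le n<2N}e^{2\pi i\phi(n)}$ with $\phi$ a real polynomial of degree $d$ whose leading coefficient is $\beta$ and in which the variable $x$ sits only in the linear coefficient. I would then invoke the sharp Weyl-type estimate \eqref{vinmvt} coming from the resolution of the main conjecture in Vinogradov's mean value theorem (cf.~\cite{Vau,Bo}): if $(a,q)=1$, $q\le N^d$ and $|\beta-\frac aq|\le q^{-2}$, then
\be\label{E:vweyl}
\Big|\sum_{N\le n<2N}e^{2\pi i\phi(n)}\Big|\les N^{1+\epsilon}\Big(\frac1q+\frac1N+\frac q{N^d}\Big)^{\frac1{d(d-1)}},
\ee
with implied constant depending only on $d$ and $\epsilon$ --- hence \emph{independent of the lower order coefficients of $\phi$}, and in particular uniform in $x$; this uniformity is what permits the passage to the $L^\infty_x$ norm. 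It therefore suffices to produce, for every large $N$, a denominator $q$ with $N<q\le N^{1+\epsilon}$ admitting an approximation $|\beta-\frac aq|\le q^{-2}$, $(a,q)=1$: for such a $q$ the bracket in \eqref{E:vweyl} is $\les N^{-1+\epsilon}$, which gives \eqref{E:keybd} upon letting $\epsilon\to0$.

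Such a $q$ is supplied directly by the Khinchin--L\'evy sequence $\{q_n\}$ of $\beta$: it is strictly increasing, satisfies $q_{n+1}\le q_n^{1+\epsilon}$ for $n>N_\epsilon$, and each $q_n$ admits $|\beta-\frac{a_n}{q_n}|\le q_n^{-2}$ with $(a_n,q_n)=1$. For $N$ large I would take the largest $n$ with $q_n\le N$ (so $n>N_\epsilon$); by maximality $q_{n+1}>N$, while $q_{n+1}\le q_n^{1+\epsilon}\le N^{1+\epsilon}\le N^d$ since $d\ge2$, so $q:=q_{n+1}$ lies in the desired window and carries the approximation $|\beta-\frac{a_{n+1}}{q_{n+1}}|\le q_{n+1}^{-2}$. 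This proves \eqref{E:keybd}, and hence the theorem. The only real content is the input \eqref{E:vweyl} together with the trivial verification that $\frac1q+\frac1N+\frac q{N^d}\les N^{-1+}$ on the window $N<q\le N^{1+\epsilon}$; the reduction to Theorem~\ref{DJcor} and the extraction of $q$ from the Khinchin--L\'evy sequence are routine. I note that for $d\le5$ classical Weyl differencing is in fact stronger, replacing $\frac1{d(d-1)}$ by the larger $2^{1-d}$; the point of the stated exponent is the improvement over \cite{cet} for large $d$. When $d=2$ the interval collapses to $\{\tfrac32\}$, consistent with \cite{R}, and when $\omega(n)=n^3$ the lower bound is sharpened separately to $\tfrac32$ in Theorem~\ref{thm:kdv3/2}.
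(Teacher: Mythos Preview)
Your proposal is correct and follows essentially the same route as the paper's proof: both reduce via Theorem~\ref{DJcor} to the exponential sum estimate, invoke \eqref{vinmvt} (Vaughan's Theorem~5.2 combined with the Bourgain--Demeter--Guth/Wooley resolution of Vinogradov's mean value conjecture), and exploit the Khinchin--L\'evy hypothesis to place a denominator $q$ in the window $[N,N^{1+\epsilon}]$. You merely spell out a few more details than the paper does---the reduction to the $+$ sign via $\omega(-n)$, the explicit extraction of $q=q_{n+1}$ from the Khinchin--L\'evy sequence, and the uniformity of \eqref{E:vweyl} in the lower-order coefficients (hence in $x$)---while the paper handles the dyadic sum by telescoping two initial-segment sums rather than stating \eqref{E:vweyl} directly on $[N,2N)$.
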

In \cite{cet}, the author obtained the bound $D_t(\omega,g)\in [1+2^{1-d},2-2^{1-d}]$.
\begin{proof}
By Theorem~\ref{DJcor}, it suffices to prove that   
$$\|H_{N,\omega}^\pm(x,t)\|_{L^\infty_x} \les_{\epsilon} N^{1-\frac1{d(d-1)} +}$$
for  Khinchin--L\'evy  $\frac{ta_d}{2\pi}$.

 Let $\gamma_{d-1}(x) := (x^{d-1}, \ldots , x)$ be the twisted cubic. Let $$J_{s , d-1}(2^N) : = \big\|\sum_{1 \leq n \leq 2^N} e^{i\gamma_{d-1}(n) \cdot \vec{x} }\big\|_{L^{2s}(\T^{d-1})}^{2s}.$$ Bourgain, Demeter and Guth \cite{bdg} as well as Wooley \cite{Woo} used decoupling and efficient congruencing, respectively, to show the optimal bound (up to the $N^{\epsilon})$ $$J_{{d \choose 2} , d-1}( N) \lesssim  N ^{{d \choose 2}+}.$$ It turns out that this mean value estimate has applications to individual exponential sums. We let $\frac{t\omega (n)}{2\pi} = \alpha_d n^d + \ldots + \alpha_1 n$. We may assume without loss of generality that $\omega (0) = 0$.

\begin{theorem}[Theorem 5.2 in \cite{Vau}] Fix $s\in \N$. Suppose that there exist $j,a,q$ with $2 \leq j \leq k$, $|\alpha_j - \frac{a}q| \leq \frac1{q^2}$, $(a,q) = 1$, $q \leq  N^j $. Then $$\big|\sum_{1 \leq n \leq  N}e^{it \omega (n) + inx}\big|^{2s} \lesssim J_{s,d-1}(2 N )  N^{d \choose 2} (q N^{-j} +  N^{-1} + q^{-1})\log ( 2N ).$$
\end{theorem}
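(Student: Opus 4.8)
The plan is to deduce this individual Weyl sum bound from Vinogradov's mean value $J_{s,d-1}$ by a counting argument, picking up the arithmetic saving $(qN^{-j}+N^{-1}+q^{-1})\log(2N)$ from the rational approximation to a coefficient of $\omega$. Write $e(y):=e^{2\pi iy}$ and $S:=\sum_{1\le n\le N}e(f(n))$, where $f(n)=\alpha_d n^{d}+\cdots+\alpha_1 n$ after absorbing $\frac{nx}{2\pi}$ into $\alpha_1$ and harmlessly normalizing $\omega(0)=0$. Raising to the $2s$--th power and expanding the product, with $\sigma_l(\mathbf n):=\sum_{i=1}^{s}n_i^{\,l}$,
\[
|S|^{2s}=\sum_{\mathbf n,\mathbf m\in[1,N]^{s}}e\Big(\sum_{l=1}^{d}\alpha_l\big(\sigma_l(\mathbf n)-\sigma_l(\mathbf m)\big)\Big),
\]
so the task is to bound a weighted count of $2s$--tuples whose weight is a character in the power--sum differences: the mean value theorem will count tuples, and the Diophantine hypothesis will furnish cancellation in the character.

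I will describe the case $j=d$, which is the one needed in this paper (the hypothesis is then a rational approximation to the leading coefficient $\alpha_d=\frac{ta_d}{2\pi}$); the case $2\le j<d$ is an analogous but more elaborate variant. Writing $S^{s}=\sum_{\mathbf n}e\big(\sum_{l=1}^{d}\alpha_l\sigma_l(\mathbf n)\big)$ and grouping the $s$--tuples $\mathbf n$ according to the vector $\mathbf c=(\sigma_1(\mathbf n),\dots,\sigma_{d-1}(\mathbf n))$ of lower power sums,
\[
S^{s}=\sum_{\mathbf c}e\Big(\sum_{l=1}^{d-1}\alpha_l c_l\Big)T(\mathbf c),\qquad T(\mathbf c):=\sum_{\substack{\mathbf n\in[1,N]^{s}\\ \sigma_l(\mathbf n)=c_l,\ l<d}}e\big(\alpha_d\sigma_d(\mathbf n)\big).
\]
Since $|c_l|\les N^{l}$, the number of $\mathbf c$ with $T(\mathbf c)\neq0$ is $\les N^{1+2+\cdots+(d-1)}=N^{\binom d2}$, so by Cauchy--Schwarz in $\mathbf c$ (using $|T(\mathbf c)|\le\#\{\mathbf n:\sigma_l(\mathbf n)=c_l\ (l<d)\}$ to recognize a mean value),
\[
|S|^{2s}=|S^{s}|^{2}\les N^{\binom d2}\sum_{\mathbf c}|T(\mathbf c)|^{2}=N^{\binom d2}\sum_{\substack{\mathbf n,\mathbf n'\in[1,N]^{s}\\ \sigma_l(\mathbf n)=\sigma_l(\mathbf n'),\ l<d}}e\big(\alpha_d(\sigma_d(\mathbf n)-\sigma_d(\mathbf n'))\big).
\]
Discarding the character already yields $|S|^{2s}\les N^{\binom d2}J_{s,d-1}(2N)$; the content of the theorem is that the character saves the further factor $(qN^{-d}+N^{-1}+q^{-1})\log(2N)$ in this weighted count.

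It therefore remains to prove that, for $|\alpha_d-\tfrac aq|\le q^{-2}$ with $(a,q)=1$, the weighted count of solutions of the degree--$(d-1)$ system above is $\les J_{s,d-1}(2N)\,(qN^{-d}+N^{-1}+q^{-1})\log(2N)$. The point is that over that solution set the difference $\sigma_d(\mathbf n)-\sigma_d(\mathbf n')$ is spread out in residue classes modulo $q$, so that summing the character detects its proximity to $a/q$. Concretely I expect to reduce, by standard manipulations that exploit the constraints (Abel summation together with an internal Weyl--type differencing that turns the $\alpha_d$--monomial into a linear exponential sum in a single variable), to one--dimensional sums of the form $\sum_{1\le h\le H}\min\big(U,\|\beta h\|^{-1}\big)$ with $\beta$ within $q^{-2}$ of $a/q$; the classical estimate $\sum_{1\le h\le H}\min(U,\|\beta h\|^{-1})\les(1+H/q)(U+q\log q)$ and the divisor bound then produce the claimed factor, the auxiliary counting being reabsorbed into $J_{s,d-1}(2N)$ by Cauchy--Schwarz exactly as above. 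I expect this last extraction of cancellation to be the crux: the reduction must be arranged so that the combinatorial loss stays exactly at $N^{\binom d2}$, the residual weighted count remains genuinely controlled by $J_{s,d-1}(2N)$ rather than by a higher--degree mean value, and the saving is uniform in the frozen data — which forces care in the choice of which power sums to freeze and in the bookkeeping of the lower coefficients $\alpha_1,\dots,\alpha_{d-1}$. The remaining ingredients (Cauchy--Schwarz, the divisor bound, the reciprocal--sum estimate, and the optimization in $H$) are routine.
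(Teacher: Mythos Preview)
First: the paper does not prove this statement. It is quoted from Vaughan's monograph and applied as a black box, so there is no proof in the paper to compare against. Your proposal must therefore be judged on its own merits, and it has a genuine gap.

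The displayed claim you isolate as ``it therefore remains to prove,'' namely
\[
\sum_{\substack{\mathbf n,\mathbf n'\in[1,N]^{s}\\ \sigma_l(\mathbf n)=\sigma_l(\mathbf n'),\ l<d}}
e\big(\alpha_d(\sigma_d(\mathbf n)-\sigma_d(\mathbf n'))\big)
\ \les\ J_{s,d-1}(2N)\,(qN^{-d}+N^{-1}+q^{-1})\log(2N),
\]
is false. The left side equals $\sum_{\mathbf c}|T(\mathbf c)|^{2}$, a sum of nonnegative terms, so any saving over $J_{s,d-1}$ would have to come from cancellation \emph{inside} each $T(\mathbf c)$. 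But for $s\le d-1$ the constraints $\sigma_l(\mathbf n)=\sigma_l(\mathbf n')$ for $l\le d-1$ force, via Newton's identities, that $\mathbf n'$ is a permutation of $\mathbf n$; hence $\sigma_d(\mathbf n)=\sigma_d(\mathbf n')$ for every solution, every exponential is $1$, and the left side equals $J_{s,d-1}(N)$ \emph{identically} in $\alpha_d$. In particular for $d=2$, $s=1$ (where the theorem is Weyl's inequality for quadratic sums) your scheme recovers only the trivial bound $|S|^{2}\les N^{2}$. The Cauchy--Schwarz in $\mathbf c$ has been applied in a direction that discards exactly the phase information carrying the arithmetic saving; the subsequent ``internal Weyl--type differencing'' you gesture at cannot recover it, because after squaring there is no longer a free variable in which $\alpha_d$ enters linearly.

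Vaughan's actual argument proceeds in the opposite order. An auxiliary shift variable $y\in[1,N]$ is introduced and averaged \emph{first}; the coefficients $\gamma_l(y)$ of the shifted polynomial $f(\cdot+y)$ depend on $y$ in a way that isolates $\alpha_j$. H\"older's inequality and an orthogonality/counting step then bring in $J_{s,d-1}(2N)$ and the factor $N^{\binom d2}$ while a genuine one--variable sum of the shape $\sum_{h}\min(N,\|\alpha_j h\|^{-1})$ survives as a separate factor, and only at that point is the standard reciprocal--sum estimate applied to produce $(qN^{-j}+N^{-1}+q^{-1})\log(2N)$. The moral is that the variable carrying the Diophantine information must be kept free until it can be summed as a linear exponential sum; your Cauchy--Schwarz collapses it prematurely.
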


Applying this with $s = {d \choose 2}$, $j=d$ and using Vinogradov's mean value theorem to estimate $J_{{d \choose 2} , d-1}(2 N )$, we find that \begin{equation}\label{vinmvt}\big|\sum_{1 \leq n \leq  N}e^{it \omega (n) + inx }\big| \lesssim_{\epsilon , d} N^{1 +  }(q N^{-d} + N^{-1} + q^{-1})^{\frac1{d(d-1)}}.\end{equation}

  Since $\alpha_d =\frac{ta_d}{2\pi}$ is Khinchin--L\'{e}vy. Then, given $\epsilon>0$,  we may choose $q \in [N , N^{1+\epsilon}]$ and the theorem follows for such $t$, as $$\|H_{N,\omega}^+(x,t)\|_{L^\infty_x} =\Big\|\sum_{1 \leq n <  2N } e^{it \omega (n) + inx }  - \sum_{1 \leq n \leq  N} e^{it \omega (n) + inx}\Big\|_{L^\infty_x}.$$
\end{proof}

For the Airy equation, we are able to take advantage of a special factorization for cubic polynomials to improve upon the lower bound presented in \cite{cet}. This can also be obtained from Strichartz estimates for Airy  equation  via Theorem \ref{thm:strichloss} below. We provided Theorem~\ref{thm:kdv3/2}   as an alternative method, which also gives more exact information on the set of $t$ for which the fractal dimension is at least $\frac32$ (Khinchin-L\'{e}vy $\frac{t}{2\pi}$). 
\begin{proof}[Proof of Theorem~\ref{thm:kdv3/2}] 
 The claim for the KdV evolution follows from the statement for Airy and the smoothing estimate in \cite{ET1} as in Section~\ref{sec:oblique}. For the Airy evolution, by Theorem \ref{DJcor}, it is enough to show for   Khinchin-L\'{e}vy $\frac{t}{2\pi}$ that
\be\label{L4airy}
\Big\|\sum_{ N\leq n \leq 2N}   e^{inx+in^3t} \Big\|_{L_x^4}\les  N^{\frac12+},
\ee
and a similar bound for $n\in [-2 N ,- N]$.
 
Now note that  
\begin{multline*}
\Big\|\sum_{ N\leq n \leq 2N} e^{inx+in^3t} \Big\|_{L^4_x}^4=2\pi\sum_{n_1,n_2,n_3= N}^{2 N }e^{i(n_1^3-n_2^3+n_3^3-(n_1-n_2+n_3)^3)t}\\
=2\pi \sum_{ n_1,n_2,n_3= N }^{2 N } e^{3it(n_1-n_2)(n_2-n_3)(n_1+n_3)}.
\end{multline*}
Letting $j=n_1-n_2$ and $k=n_2-n_3$, we rewrite this as 
$$
\sum_{n_2= N}^{2 N }\sum_{j= N-n_2}^{2 N -n_2}\sum_{k=n_2-2 N }^{n_2- N}  e\big(3tjk(j-k+2n_2)\big).
$$
Changing the order of the sums we rewrite
\begin{multline*} 
\sum_{j=- N}^{2 N } \sum_{n_2 = \max( N-j, N)}^{\min(2 N -j,2 N )} \sum_{k=n_2-2 N }^{n_2- N}  e^{3itjk(j-k+2n_2)} \\ =\sum_{j=- N}^{ N}  \sum_{k=\max(- N-j,- N)}^{\min( N-j, N)} 
\sum_{n_2=\max( N-j, N,k+ N)}^{\min(2 N -j,2 N ,k+2 N )} e^{3itjk(j-k+2n_2)}.
\end{multline*}
Thus we can estimate the sum by
$$
\les \sum_{j=- N}^{ N}  \sum_{k=\max(- N-j,- N)}^{\min( N-j, N)} \min\big( N,\frac1{\|\frac{3 jkt}{\pi}\|}\big) \\ \les N^{0+}\sum_{|w|\les  2N }\min\big( N,\frac1{\|\frac{wt}{2\pi}\|}\big)\les N^{2+ } 
$$
for any Khinchin--L{\'e}vy $\frac{t}{2\pi}$ finishing the proof. In the last inequality we used Lemma 2.2 in \cite{Vau}.
\end{proof}

The lower bounds for the dimension can also be obtained using Strichartz estimates. Note that the statement below does not require that the data is of bounded variation. 
\begin{theorem}\label{thm:strichloss}
Assume that $e^{itL_\omega}$ satisfies a Strichartz estimate of the form
\be\label{strich}
\|e^{itL_\omega} f\|_{L^p_tL^q_x}\les \|f\|_{H^s}
\ee
for some $s\in [0, \frac12)$, $2<q<\infty$ and $p\in [1,\infty]$. Let $r_0:=\sup\{r: g\in H^r\} >\frac12$. 
Then for a.e.~$t$,  $D_t(\omega,g)\geq  2-\frac{2r_0-(r_0-s)q^\prime}{2-q^\prime}$. In particular for $s=0$, $D_t(\omega,g)\geq  2- r_0 $. 
\end{theorem}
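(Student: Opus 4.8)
The plan is to obtain the bound as a direct consequence of the interpolation argument in the proof of part ii) of Theorem~\ref{DJcor}, the only new ingredient being that the required $L^q_x$ control on the Littlewood--Paley pieces of $e^{itL_\omega}g$ is now supplied by the Strichartz estimate \eqref{strich} rather than by an exponential sum bound together with $g\in BV(\T)$ --- which is exactly why the bounded variation hypothesis can be dropped. First I would dispose of continuity: since $r_0>\frac12$ there is $r\in(\frac12,r_0)$ with $g\in H^r(\T)$, and as $e^{itL_\omega}$ is unitary on $H^r$ and $H^r(\T)\hookrightarrow C(\T)$ for $r>\frac12$ (absolute convergence of the Fourier series), the function $e^{itL_\omega}g$ is continuous in $x$ for every $t$, so Theorem~\ref{thm:DJ} and part ii) of Theorem~\ref{DJcor} may be invoked.

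The main step is to apply \eqref{strich} to $f=P_N g$. Since $L_\omega$ is a Fourier multiplier it commutes with $P_N$, and since $\widehat{P_N g}$ is supported in $\{N\le|n|<2N\}$ we have $\|P_N g\|_{H^s}\sim N^s\|P_N g\|_{L^2}$; also $g\in H^{r_0-}(\T)$ gives $\|P_N g\|_{L^2}\les N^{-r_0+}$. Hence
$$\big\|P_N\big(e^{itL_\omega}g\big)\big\|_{L^p_tL^q_x}=\big\|e^{itL_\omega}P_N g\big\|_{L^p_tL^q_x}\les\|P_N g\|_{H^s}\les N^{\,s-r_0+}.$$
If $p=\infty$ this already yields $\|P_N(e^{itL_\omega}g)(t,\cdot)\|_{L^q_x}\les N^{s-r_0+}$ for every $t$. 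If $p<\infty$, fix $\eps>0$; then, with $I$ the time interval over which \eqref{strich} is posed and the sum over dyadic $N$,
$$\int_I\sum_{N}N^{(r_0-s-2\eps)p}\,\big\|P_N(e^{itL_\omega}g)(t,\cdot)\big\|_{L^q_x}^p\,dt\les\sum_N N^{-\eps p}<\infty,$$
so for a.e.\ $t$ the summands are bounded, and letting $\eps\downarrow 0$ along a sequence one gets $\|P_N(e^{itL_\omega}g)(t,\cdot)\|_{L^q_x}\les_t N^{-(r_0-s)+}$ for a.e.\ $t$ (taking $I$ arbitrarily large, or using periodicity of the evolution in the polynomial case, this gives a.e.\ $t\in\R$).

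Now I would run the interpolation of part ii) of Theorem~\ref{DJcor} with $\gamma:=r_0-s$ and $r:=r_0$: the bound just obtained plays the role of ``$\|P_N(e^{itL_\omega}g)\|_{L^q_x}\les N^{-\gamma+}$'', while unitarity and $g\notin H^{r_0+}(\T)$ force $\|P_N(e^{itL_\omega}g)\|_{L^2_x}=\|P_N g\|_{L^2}$ to exceed $N^{-r_0-\eps}$ for infinitely many dyadic $N$, for every $\eps>0$. Interpolating $L^1_x$ between $L^2_x$ and $L^q_x$, i.e.\ $\|h\|_{L^2}\le\|h\|_{L^1}^\theta\|h\|_{L^q}^{1-\theta}$ with $\tfrac12=\theta+\tfrac{1-\theta}{q}$, one finds $\|P_N(e^{itL_\omega}g)\|_{L^1_x}\ges N^{-\beta-}$ for infinitely many $N$, where a direct computation gives $\beta=\tfrac{s}{\theta}+(r_0-s)=\tfrac{2r_0-(r_0-s)q'}{2-q'}$. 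Hence $e^{itL_\omega}g\notin B^{\beta+}_{1,\infty}$, and Theorem~\ref{thm:DJ} yields $D_t(\omega,g)\ge 2-\beta$, which is the claimed inequality; taking $s=0$ gives $\beta=r_0$ and $D_t(\omega,g)\ge 2-r_0$.

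I do not expect a genuine obstacle here: the content is just the standard device of upgrading a space--time norm bound to a pointwise-in-$t$ bound valid a.e.\ (the summation/Borel--Cantelli step, which must be carried out on a fixed time interval and uniformly in $p\in[1,\infty]$), combined with the $L^1$--$L^2$--$L^q$ interpolation already used in part ii) of Theorem~\ref{DJcor} and in Proposition~\ref{prop:obliquelower}. The only points requiring care are the $\eps$-bookkeeping --- so that the ($t$-independent) frequency sequence coming from $g\notin H^{r_0+}$ meets the full-measure set of admissible $t$ --- and verifying that the exponent produced by the interpolation is exactly $\frac{2r_0-(r_0-s)q'}{2-q'}$.
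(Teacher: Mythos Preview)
Your proposal is correct and follows the same overall strategy as the paper: use the Strichartz estimate to obtain, for a.e.\ $t$, the bound $\|P_N(e^{itL_\omega}g)\|_{L^q_x}\les_t N^{-(r_0-s)+}$, and then feed this into the $L^1$--$L^2$--$L^q$ interpolation of part~ii) of Theorem~\ref{DJcor}. The only difference is in how the a.e.\ $t$ reduction is carried out. The paper applies \eqref{strich} once to the single function $f=\langle\partial_x\rangle^{r_0-s-}g$, obtaining $\|\langle\partial_x\rangle^{r_0-s-}e^{itL_\omega}g\|_{L^q_x}<\infty$ for a.e.\ $t$, and then invokes Littlewood--Paley theory (valid since $1<q<\infty$) to extract the $P_N$ bounds. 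You instead apply \eqref{strich} to each $P_N g$ separately and then run a weighted $\ell^p$-summation over dyadic $N$ to pass from $L^p_tL^q_x$ control on each piece to a.e.\ $t$ control uniformly in $N$. Both are valid; the paper's route is marginally cleaner in that it avoids the Borel--Cantelli/summation step, while yours makes the role of the Littlewood--Paley projections more explicit and sidesteps the need to invoke the $L^q$ boundedness of the square function.
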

\begin{proof}
First of all note that by Sobolev embedding and the continuity of the propagator, $e^{itL_\omega}g$ is a continuous function of $x$ and $t$. 
By the Strichartz estimate, we  have
$$
\|\la \partial_x\ra^{r_0-s-} e^{itL_\omega} g\|_{L^p_tL^q_x}= \| e^{itL_\omega} \la  \partial_x\ra^{r_0-s-}g\|_{L^p_tL^q_x}\les \|\la \partial_x\ra^{r_0-}g\|_{L^2}<\infty.
$$
Thus for a.e.~$t$,  
$\|\la \partial_x\ra^{r_0-s-} e^{itL_\omega} g\|_{L^q_x}< \infty$. By 
Littlewood--Paley theory, this implies that for a.e.~$t$ 
$$
\|P_Ne^{itL_\omega}g\|_{L^q_x} \les_t N^{-r_0+s+}
$$
Now, the statement follows from the interpolation argument as in the proof of Theorem~\ref{DJcor} above.   
\end{proof}
\begin{rmk} Recall that the linear Schr\"odinger and Airy propagators   satisfy the Strichartz estimates above with $p=q=4$ and $s=0$, see e.g. \cite{B1} or \cite{ETbook}.  Therefore, the lower bound $2-r_0$ follows.
In the case $r_0=\frac12$, one needs to assume in addition  that $g\in BV(\T)$, which by a theorem of Oskolkov \cite[Proposition 14]{O} implies that $e^{itL}g$ is continuous in $x$ for a.e.~$t$ for both Schr\"odinger  and Airy propagators.    

\end{rmk}

\section{Equations with non--polynomial dispersion relations}\label{sec:nonpoly}
We start this section by noting that the results for the polynomial dispersion relation apply to some equations with non--polynomial dispersion relation if the dispersion relation is a small perturbation of a polynomial. For example for the Boussinesq equation the dispersion relation is 
$\omega(n)= \sqrt{n^2+n^4} =n^2+ r(n) $, where 
$r(n)=O(1)$ and $r^\prime(n) =O(\frac1n)$. Therefore by summation by parts \eqref{sumbyparts}  we have 
$$
\Big|\sum_{ N\leq n< 2 N }e^{i\omega(n)t+inx} \Big| \les \sup_{ N\leq u< 2 N } \Big| \sum_{ N\leq n \leq u} e^{in^2t+inx}\Big|
$$
which, for a.e.~$t$, is bounded by $N^{ \frac12+ }$ uniformly in $N$. In particular, as for the 
Schr\"odinger equation, for a.e.~$t$  the solution to the Boussinesq equation on the torus has dimension $\frac32$ for step function initial data or more generally for data in $BV(\T)\setminus H^{\frac12+}(\T)$. 
The same statement holds for the linear group of Benjamin--Ono equation whose dispersion relation is $n|n|$.  

Below we study equations with $\omega(n)=|n|^\alpha, $ $\alpha>0$ noninteger. We note applications to water wave equations: For the gravity--capillary wave equation, we have 
$$\omega(n)=\sqrt{(n+n^3)\tanh(n)}=|n|^{\frac32}+r(n),$$ where $|r(n)|\les 1$ and $|r^\prime(n)|\les |n|^{-\frac32}$. Therefore, by summation by parts,  the bounds we prove below, see Theorem~\ref{thm:nalpha} and Theorem~\ref{t3/2}, for $\omega(n)=|n|^{\frac32}$  apply to gravity--capillary wave equation. In particular, for step function data $g$ we have $D_t(\omega,g)\in [\frac54,\frac74]$ for all $t\neq 0$ and $D_t(\omega,g)\in[\frac{11}8,\frac{13}8]$ for a.e.~$t$ characterized by Khinchin--L\'evy numbers.

Similarly, for the gravity water waves $\omega(n)=\sqrt{n \tanh(n)},$  the assertion (for $\alpha=\frac12$) of Theorem~\ref{thm:nalpha}  is valid.  In particular,  for step function data $g$,  $D_t(\omega,g)\in[\frac54,\frac74]$ for all $t\neq 0$.

\subsection{Fractional  Schr\"odinger equation}
In this section, we  consider the fractional cubic Schr\"odinger equation:
\begin{equation}\label{fsch}
\left\{
\begin{array}{l}
iu_{t}+(-\Delta)^{\frac\alpha2} u =\pm |u|^2u, \,\,\,\,  x \in \T, \,\,\,\,  t\in \mathbb{R} ,\\
u(x,0)=g(x)\in H^{s}(\T), \\
\end{array}
\right.
\end{equation}
where $\alpha \in (0,2)\setminus\{1\}$. The case $\alpha =2$ is the cubic NLS equation.  

 In \cite{det,egtz1} smoothing estimates for this equation were established. In particular,  for $\alpha\in (1,2)$,
Theorem 1.1 in \cite{egtz1} implies that if $g\in BV(\T)\subset   H^{\frac12-}(\T),$  then  
 $$
 u(t,x)-e^{it(-\Delta)^{\frac\alpha2}-iPt} g \in C^0_tH_x^{s_1}
 $$
 for $t$ in the local existence interval and for $s_1<\alpha-\frac12$.
 Therefore,  as a function of $x$,  $u(t,x)-e^{it(-\Delta)^{\frac\alpha2}-iPt} g$ is in $C^\beta$ for each $\beta<\alpha-1$.  
 
With this observation we concentrate on the properties of the linear group
$$
e^{it(-\Delta)^{\frac\alpha2} } g = \sum_n \widehat g(n) e^{i t |n|^{\alpha} +i xn }.
$$
We start with the proof of Theorem~\ref{thm:nalpha} which is a   simple corollary of van der Corput bounds and   Theorem~\ref{DJcor} above. In sharp contrast with the case of polynomial dispersion the assertion of this theorem  is independent of the algebraic nature of time. This holds since $\{t|n|^\alpha (\text{mod }1): n\in \Z\}$ is uniformly distributed on $[0,1]$ for $\alpha\in (1,2)$ and  for  $t\neq 0$.
In Section~\ref{sec:ep} we  will discuss possible improvements of this result that hold for almost every time, or for certain values of $\alpha\in (1,2)$. 
\begin{proof}[Proof of Theorem~\ref{thm:nalpha}]
Fix  $\alpha\in(0,2)\setminus \{1\}$.   
We claim that for each $t\neq 0$  
\begin{equation}\label{expsum1}\Big\|   \sum_{ N \leq n < 2 N }e^{i(t n^{\alpha} \pm xn)} \Big\|_{L_x^{\infty}} \lesssim N^{1-\beta}.\end{equation} 
 This yields the proposition by Theorem~\ref{DJcor}.

We use the following van der Corput bound, see for example  Theorem 8.20 in \cite{Iw}:
\begin{theorem} For given real valued $f$ and fixed integer $k\geq 2$, 
$$
\big|\sum_{n \sim  N} e^{if(n)}\big|\les N\big( \Lambda^{\frac1{2^k-2}}    + \Lambda^{-\frac1{2^k-2}} N^{ -2^{2-k}}\big)
$$
provided that $f\in C^k$ and $ |f^{(k)}(u)| \sim \Lambda>0$ for $u\sim  N$. 
\end{theorem}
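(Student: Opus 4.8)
\emph{Proof strategy.} This is the classical $k$-th derivative test of van der Corput, and the plan is to prove it by induction on $k$, the inductive step being van der Corput's differencing (``$A$-process''). The base case $k=2$ is the second--derivative test: there the asserted bound reads $\les N\Lambda^{1/2}+\Lambda^{-1/2}$. Since $f\in C^2$ and $|f''(u)|\sim\Lambda$ on $u\sim N$, continuity forces $f''$ to have constant sign (it cannot change sign without vanishing), so $f'$ is strictly monotone and sweeps out an interval $I$ of length $\sim N\Lambda$. The idea is to split the summation range into the maximal blocks on which $\|f'(u)\|$ (distance to the nearest integer) stays $\ge\delta$ and the complementary near--resonant set. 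On each good block the Kusmin--Landau first--derivative estimate gives $\les\delta^{-1}$, and there are $\les N\Lambda+1$ such blocks; on the near--resonant set, $|f''|\sim\Lambda$ forces $\|f'\|<\delta$ to hold on a set of measure $\les\delta/\Lambda$ around each of the $\les N\Lambda+1$ integers in $I$, hence to contain $\les N\delta+\delta/\Lambda+1$ integers, which we bound trivially. Choosing $\delta\sim\Lambda^{1/2}$ (and disposing of $\Lambda\ges1$ by the trivial bound) yields the base case.

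\emph{Inductive step.} Fix $k\ge3$ and assume the estimate with $k-1$ in place of $k$. For an integer $1\le H\le N/2$ the Weyl--van der Corput inequality gives
$$
\Big|\sum_{n\sim N}e^{if(n)}\Big|^2\les\frac{N^2}{H}+\frac{N}{H}\sum_{1\le h\le H}\Big|\sum_{n}e^{ig_h(n)}\Big|,\qquad g_h(n):=f(n+h)-f(n),
$$
each inner sum running over an interval of length $\sim N$. Since $g_h^{(k-1)}(u)=\int_u^{u+h}f^{(k)}(v)\,dv$ and $f^{(k)}$ has constant sign with $|f^{(k)}|\sim\Lambda$, one has $|g_h^{(k-1)}(u)|\sim h\Lambda$ for $u\sim N$; applying the induction hypothesis to $g_h$ with parameter $h\Lambda$ gives $\big|\sum_n e^{ig_h(n)}\big|\les N\big((h\Lambda)^{\theta}+(h\Lambda)^{-\theta}N^{-2^{3-k}}\big)$ with $\theta=\tfrac1{2^{k-1}-2}$. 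Summing over $h\le H$ via $\sum_{h\le H}h^{\pm\theta}\sim H^{1\pm\theta}$ and inserting,
$$
\Big|\sum_{n\sim N}e^{if(n)}\Big|^2\les N^2\Big(H^{-1}+(H\Lambda)^{\theta}+(H\Lambda)^{-\theta}N^{-2^{3-k}}\Big).
$$

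\emph{Choice of $H$ and conclusion.} The plan is to take $H\sim\Lambda^{-1/(2^{k-1}-1)}$, rounded to an integer in $[1,N/2]$. Using the identity $\tfrac2{2^k-2}=\tfrac1{2^{k-1}-1}$ one checks that both $H^{-1}$ and $(H\Lambda)^{\theta}$ then equal $\Lambda^{2/(2^k-2)}$, while $(H\Lambda)^{-\theta}N^{-2^{3-k}}=\Lambda^{-2/(2^k-2)}N^{-2^{3-k}}$; taking square roots and using $2^{3-k}/2=2^{2-k}$ produces exactly $\les N\big(\Lambda^{1/(2^k-2)}+\Lambda^{-1/(2^k-2)}N^{-2^{2-k}}\big)$. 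It remains only to dispose of the degenerate ranges: if $\Lambda\ges1$ the first asserted term already exceeds $N$, and if $\Lambda\les N^{-(2^{k-1}-1)}$ (so the optimal $H$ would exceed $N$) then, using $2^{2-k}\le\tfrac12$ for $k\ge3$, the second asserted term exceeds $N$; in either case the claimed bound is weaker than the trivial estimate.

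The only genuinely substantive step is the base case $k=2$, where the analytic input (Kusmin--Landau together with the measure estimate for near--resonant points) enters; the differencing mechanism and the final optimization are mechanical once one spots the recursion $\tfrac1{2^{k-1}-2}\mapsto\tfrac1{2^k-2}$ and checks its compatibility with the error exponent $2^{3-k}\mapsto2^{2-k}$.
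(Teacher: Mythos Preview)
Your argument is a correct rendition of the standard proof of the van der Corput $k$-th derivative test: the second--derivative base case via Kusmin--Landau plus a measure estimate on the near--stationary set, and the inductive step via Weyl--van der Corput differencing with the optimization $H\sim\Lambda^{-1/(2^{k-1}-1)}$. The exponent bookkeeping you lay out (in particular $\theta/(1+\theta)=1/(2^{k-1}-1)=2/(2^k-2)$ and the halving $2^{3-k}\mapsto 2^{2-k}$ upon taking a square root) is exactly what is needed, and the disposal of the degenerate ranges $\Lambda\gtrsim 1$ and $\Lambda\lesssim N^{-(2^{k-1}-1)}$ is handled correctly.

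As for comparison with the paper: the paper does \emph{not} prove this theorem at all. It is stated there as a black box, with the attribution ``see for example Theorem~8.20 in \cite{Iw}'', and then applied with $f(u)=tu^\alpha\pm ux$ and $k=2$ or $k=3$ to obtain \eqref{expsum1}. So you have supplied a proof where the paper simply cites the literature; your proof is the classical one, essentially the same as what one finds in \cite{Iw} or in Chapter~2 of \cite{Gr}.

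One minor remark on the base case: your count of near--resonant integers, written as $\lesssim N\delta+\delta/\Lambda+1$, omits the $+(N\Lambda+1)$ coming from the number of near--resonant intervals (a union of $K$ intervals of total measure $M$ contains $\lesssim M+K$ integers). This is harmless, since after restricting to $\Lambda\lesssim 1$ one has $N\Lambda\lesssim N\Lambda^{1/2}$, which is already present; but it is worth stating the count precisely.
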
  
We use this with $f(u)=tu^\alpha\pm ux$ and $\Lambda \sim  |t| N^{\alpha-k }$  taking $k=2$ for $\alpha \leq \frac32$ and $k=3$ for $\alpha>\frac32$. 
\end{proof}
 
Combining this result with the smoothing estimate from \cite{egtz1} discussed above we have 
\begin{theorem}\label{thm:fracNLS} Fix $\alpha\in (1,2)$.  The claim of Theorem~\ref{thm:nalpha} remains valid for the cubic fractional  Schr\"odinger evolution \eqref{fsch} on the local existence interval for any 
$$\beta < \left\{\begin{array}{ll}
\alpha-1,&  \alpha \in(1,\frac43),\\
 1-  \frac\alpha2, & \alpha\in [\frac43,\frac32],\\
  \frac12 - \frac\alpha6, &  \alpha\in( \frac32,2).
  \end{array}\right.
$$ 
\end{theorem}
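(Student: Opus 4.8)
The plan is to combine two ingredients already available: the dimension bounds for the linear fractional Schrödinger group from Theorem~\ref{thm:nalpha}, and a nonlinear smoothing estimate from \cite{egtz1} which says that the difference $N(t,x) := u(t,x) - e^{it(-\Delta)^{\alpha/2} - iPt}g$ lies in $C^0_t H^{s_1}_x$ for every $s_1 < \alpha - \tfrac12$ on the local existence interval, whenever $g \in BV(\T) \subset H^{\frac12-}(\T)$. By Sobolev embedding in $x$, this gives $N(t,\cdot) \in C^\beta_x$ for every $\beta < \alpha - 1$, locally in $t$. The point is that $u$ splits as a linear part plus a remainder that is \emph{smoother} than the linear part precisely in the regime $\alpha \in (1,2)$ where $\alpha - 1$ exceeds the H\"older exponent $\beta$ coming from the van der Corput bounds of Theorem~\ref{thm:nalpha} (which is $1 - \alpha/2$ for $\alpha \le 3/2$ and $\tfrac12 - \alpha/6$ for $\alpha > 3/2$). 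Thus the graph of $u(t,\cdot)$ inherits the H\"older regularity --- and hence the upper dimension bound --- of whichever of the two pieces is rougher.

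The steps, in order: First, fix $\alpha \in (1,2)$ and note that $u(t,x) = e^{it(-\Delta)^{\alpha/2}}(e^{-iPt}g)(x) + N(t,x)$, where the first term is, up to the harmless unimodular constant $e^{-iPt}$, exactly the linear evolution analyzed in Theorem~\ref{thm:nalpha}; hence its real and imaginary parts have graphs of dimension at most $2 - \beta_{\mathrm{lin}}$, where $\beta_{\mathrm{lin}}$ is the exponent from that theorem. Second, invoke the smoothing estimate: $N(t,\cdot) \in C^{(\alpha-1)-}_x$. Third, observe that a sum of a $C^{\beta_1}$ and a $C^{\beta_2}$ function is $C^{\min(\beta_1,\beta_2)}$, so $u(t,\cdot) \in C^{\min(\beta_{\mathrm{lin}},\, \alpha-1)-}_x$, and the graph dimension is at most $2 - \min(\beta_{\mathrm{lin}}, \alpha-1)$. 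This produces the stated value of $\beta$: for $\alpha \in (1, 4/3)$ one has $\alpha - 1 < 1 - \alpha/2$, so the nonlinear part dominates and $\beta < \alpha - 1$; for $\alpha \in [4/3, 3/2]$ we have $\alpha - 1 \ge 1 - \alpha/2$, so $\beta < 1 - \alpha/2$; for $\alpha \in (3/2, 2)$ one checks $\alpha - 1 > \tfrac12 - \alpha/6$, so $\beta < \tfrac12 - \alpha/6$. Fourth, for the lower bound claimed in Theorem~\ref{thm:nalpha} (namely $D_t \ge 1 + \beta$ when $g \notin H^{\frac12+}(\T)$), one argues as in the proof of Theorem~\ref{thm:NLSoblique}: since $\beta < \alpha - 1 \le \tfrac12 \le 1/2$ one has $\alpha - 1 \ge \beta$, so the nonlinear part $N(t,\cdot)$ lies in $B^{\beta+}_{1,\infty}$, while the linear part fails to lie in $B^{\beta+}_{1,\infty}$ (this is exactly what the lower-bound half of Theorem~\ref{thm:nalpha} establishes via the interpolation argument of Theorem~\ref{DJcor}); hence their sum is not in $B^{\beta+}_{1,\infty}$ and Theorem~\ref{thm:DJ} gives $D_t \ge 1 + \beta$. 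One should also note that the continuity hypothesis required for the lower bound holds since the linear part is continuous in $x$ for $t \ne 0$ by Theorem~\ref{thm:nalpha} and $N(t,\cdot)$ is continuous by the smoothing estimate.

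The only genuine subtlety --- and the step I would be most careful about --- is matching the constant $e^{-iPt}$ correctly so that Theorem~\ref{thm:nalpha} applies verbatim to the linear part of the Wick-ordered (or regular) equation, and verifying that $g \in BV(\T)$ indeed lands in the range $H^{\frac12-}(\T)$ where the \cite{egtz1} smoothing estimate is valid, with the constant $P = \frac1\pi\|g\|_{L^2}^2$ playing no role beyond a phase. Everything else is bookkeeping: comparing $\alpha - 1$ against $1 - \alpha/2$ and $\tfrac12 - \alpha/6$ on the three subintervals, and quoting the already-proved Theorem~\ref{thm:nalpha} and Theorem~\ref{DJcor}. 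I would also remark, as the paper does for KdV and NLS, that the argument is entirely parallel to the oblique-line case and requires no new exponential sum input.
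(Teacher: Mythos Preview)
Your approach is exactly the paper's: the paper's entire proof is the sentence ``Combining this result with the smoothing estimate from \cite{egtz1} discussed above we have\ldots'', and you have correctly unpacked this as $u = (\text{linear part}) + N$ with $N\in C^{(\alpha-1)-}_x$ from smoothing, then taking the minimum of the two H\"older exponents and comparing $\alpha-1$ with $1-\alpha/2$ and $\tfrac12-\alpha/6$ on the three subintervals. The upper-bound/$C^\beta$ part is clean and matches the paper.

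There is, however, a slip in your lower-bound step. To obtain $D_t\ge 1+\beta = 2-(1-\beta)$ via Theorem~\ref{thm:DJ} you need $u\notin B^{(1-\beta)+}_{1,\infty}$, not $u\notin B^{\beta+}_{1,\infty}$ as you wrote; these indices differ unless $\beta=\tfrac12$. With the correct index, the requirement on the nonlinear part becomes $N\in B^{(1-\beta_{\mathrm{lin}})+}_{1,\infty}$, and for this the $C^{(\alpha-1)-}_x$ regularity of $N$ is \emph{not} enough (one would need $\alpha-1>\alpha/2$, i.e.\ $\alpha>2$). Instead use the stronger information already quoted: $N\in C^0_tH^{(\alpha-\frac12)-}_x\subset B^{(\alpha-\frac12)-}_{1,\infty}$. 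Then the needed inclusion $\alpha-\tfrac12>1-\beta_{\mathrm{lin}}$ becomes $\alpha-\tfrac12>\alpha/2$ for $\alpha\le\tfrac32$ and $\alpha-\tfrac12>\tfrac12+\alpha/6$ for $\alpha>\tfrac32$, both of which hold for $\alpha\in(1,2)$. This gives $D_t\ge 1+\beta_{\mathrm{lin}}\ge 1+\beta$ for the theorem's $\beta$, completing the argument exactly as in the proof of Theorem~\ref{thm:NLSoblique}.
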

  
 \begin{rmk}  In \cite{det}, it was proved that the fractional Schr\"odinger evolution $e^{it(-\Delta)^{\frac\alpha2}}$, $\alpha\in(1,2)$, satisfies the Strichartz estimates \eqref{strich} with $p=q=4$ and $s>\frac{2-\alpha}8$. Therefore, by Theorem~\ref{thm:strichloss},  for a.e.~$t$, $D_t(|n|^\alpha,g)$ is at least $\frac32-r_0+\frac\alpha4$ 
 when  $r_0:=\sup\{r: g\in H^r\} >\frac12$. 
In the case $r_0=\frac12$, one needs to assume in addition  that $g\in BV$, which by Theorem~\ref{thm:nalpha} implies that $e^{it(-\Delta)^{\frac\alpha2}}g$ is continuous in $x$ for every $t$. Thus, in particular for step function data the dimension is  at least $1+\frac\alpha4$ for a.e.~$t$. This improves the lower bound for the dimension that Theorem~\ref{thm:nalpha} and Theorem~\ref{thm:fracNLS} give for all $\alpha\in(\frac43,2)$ and  for a.e. $t$. 
\end{rmk}

\subsection{Higher order non--polynomial dispersion relations} 

In this section we consider dispersion relations $\omega(n)=|n|^\alpha,  \alpha\in (2,\infty)\setminus \N$. The following theorem of Heath--Brown \cite{He}, which improves the van der Corput bound above for $k\geq 3$,  is useful in this range:
\begin{theorem}[\cite{He}]\label{higherdeg}
For given real valued $f$ and fixed integer $k\geq 3$, 
$$
\big|\sum_{n \sim  N} e^{if(n)}\big|\les N^{1+}   \big(\Lambda +  N^{-1 } 
+ N^{-2  }\Lambda^{-\frac2{k }}\big)^\frac1{k(k-1)}
$$
provided that $f\in C^k$ and $ |f^{(k)}(u)| \sim \Lambda>0$ for $u\sim  N$.  
\end{theorem}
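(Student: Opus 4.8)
The statement is Heath--Brown's $k$th derivative estimate \cite{He}, so strictly speaking nothing needs to be proved here; but were one to reprove it, the plan would be to pass from an individual sum with a large $k$th derivative to polynomial exponential sums of degree $k$ on short blocks, and then to feed these into Vaughan's Theorem~5.2 quoted above together with the now-optimal Vinogradov bound $J_{\binom k2,k-1}(H)\les H^{\binom k2+}$ of \cite{bdg,Woo}.

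First I would normalize: translate and rescale so the sum runs over $n\in(N,2N]$, and, replacing $f$ by $-f$ if necessary, assume $f^{(k)}>0$ there, so that $f^{(k-1)}$ is strictly increasing. Next, fix a block length $H\in[1,N]$ and partition $(N,2N]$ into $\asymp N/H$ consecutive blocks $I_\nu$ of length $\le H$ with left endpoints $x_\nu$. On $I_\nu$, Taylor's theorem gives $f(x_\nu+u)=Q_\nu(u)+R_\nu(u)$ with $Q_\nu$ the degree-$k$ Taylor polynomial of $f$ at $x_\nu$; since $|f^{(k)}|\asymp\Lambda$ one has $|R_\nu'(u)|\les\Lambda H^{k-1}$ on $I_\nu$, so $R_\nu$ has total variation $\les\Lambda H^k$ there. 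Imposing $\Lambda H^k\les 1$, the factor $e^{iR_\nu(u)}$ has bounded variation and is removed by summation by parts \eqref{sumbyparts} at the cost of an $O(1)$ loss. This reduces the claim to bounding $\sum_\nu\max_{H'\le H}\big|\sum_{1\le u\le H'}e^{iQ_\nu(u)}\big|$. The key structural point is that the degree-$(k-1)$ coefficient of $Q_\nu$ is $f^{(k-1)}(x_\nu)/(k-1)!$, so consecutive blocks have these coefficients differing by $\asymp\Lambda H$; that is, as $\nu$ ranges, the subleading coefficients form a $\Lambda H$-separated set.

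For each block I would then use Dirichlet's theorem to choose a rational approximation with denominator $q_\nu\le H^{k-1}$ to this degree-$(k-1)$ coefficient, and apply Theorem~5.2 of \cite{Vau} with $j=k-1$ (legitimate precisely because $k\ge 3$) together with $J_{\binom k2,k-1}(2H)\les H^{\binom k2+}$, obtaining the per-block bound
$$\max_{H'\le H}\Big|\sum_{1\le u\le H'}e^{iQ_\nu(u)}\Big|\les H^{1+}\big(q_\nu H^{1-k}+H^{-1}+q_\nu^{-1}\big)^{\frac1{k(k-1)}}.$$
Finally I would sum over the $\les N/H$ blocks, grouping them by the dyadic size $q_\nu\sim Q$: the $\Lambda H$-separation bounds the number of blocks with a given $Q$ in terms of $Q$ and $\Lambda H$, while the displayed estimate supplies a corresponding power of $Q$; optimizing over the dyadic ranges and then over $H$ (subject to $\Lambda H^k\les 1$ and $H\le N$) produces the three terms $\Lambda$, $N^{-1}$, $N^{-2}\Lambda^{-2/k}$ inside the exponent $\frac1{k(k-1)}$.

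The hard part is this last step. The triangle inequality over blocks has already cost a factor $\asymp N/H$, and the Vinogradov input on a single block is by itself too weak to recover it. What rescues the estimate is that the ``major-arc'' blocks --- those with $q_\nu$ small, where the per-block bound is poor --- are scarce, precisely because the subleading Taylor coefficients are $\Lambda H$-separated, so only a controlled number of them can fall in the $1/q_\nu^2$-neighbourhood of a fraction of small denominator. Making this scarcity quantitative, balancing it against the contribution of the remaining blocks, and then choosing $H$ optimally is the technical core of \cite{He}; it is this that upgrades the classical van der Corput exponent $\frac1{2^k-2}$ (the one used in the theorems above for small $k$) to Heath--Brown's $\frac1{k(k-1)}$. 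For our purposes one may of course simply invoke \cite{He} as a black box.
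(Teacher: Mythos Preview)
The paper does not prove this theorem at all: it is quoted verbatim from \cite{He} and immediately applied as a black box to the dispersion relations $\omega(n)=|n|^\alpha$. You correctly recognize this at the outset, so in that sense your proposal matches the paper exactly.

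As for the sketch you offer of Heath--Brown's argument, the overall architecture is right --- partition into blocks of length $H$, replace $f$ by its degree-$k$ Taylor polynomial on each block under the constraint $\Lambda H^k\les 1$, and exploit the $\asymp\Lambda H$-spacing of the degree-$(k-1)$ Taylor coefficients together with the optimal Vinogradov bound $J_{\binom k2,k-1}(H)\les H^{\binom k2+}$. The one point where your outline diverges from \cite{He} is the mechanism for combining blocks: rather than bounding each block individually via a Weyl--Vaughan estimate (your Theorem~5.2 with $j=k-1$) and then summing over dyadic ranges of $q_\nu$, Heath--Brown applies H\"older's inequality with exponent $k(k-1)$ directly to the sum over blocks and invokes the Vinogradov mean value theorem on the resulting moment, with the coefficient spacing entering through a large-sieve-type counting of how many block polynomials can have nearby coefficient vectors. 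Your route via per-block Dirichlet approximation and a major/minor-arc dichotomy is plausible in spirit but would require some care to recover the clean exponent $\frac1{k(k-1)}$; Heath--Brown's H\"older-then-mean-value approach gets there more directly. Since the paper only needs the statement, either description suffices as commentary, and your final sentence invoking \cite{He} as a black box is exactly what the paper does.
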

For $d\geq 3$,  $\alpha \in (d-1,d)$ and $\{\alpha\}>\frac2{d+1}$ we apply the theorem with $k=d+1$ to get 
 $$\big\|H_{N,\omega}^\pm\big\|_{L^\infty_x}\les N^{1-\frac1{d(d+1)}+},\,\,\,t\neq 0.$$
 For $\{\alpha\}\leq \frac2{d+1}$, we apply it with $k=d$ to get $$ \big\|H_{N,\omega}^\pm\big\|_{L^\infty_x}\les N^{1-\frac{1-\{\alpha\}}{d(d-1)}+ },\,\,\,t\neq 0.$$  
This leads to dimension bounds as above. In particular, it implies that for $g\in BV(\T)$, $e^{it(-\Delta)^{\frac\alpha2}}g$ is continuous in $x$ for each $t\neq 0$, and hence quantization fails for each noninteger $\alpha>0$.

 \subsection{Exponent pair conjecture, and $A$ and $B$ processes}\label{sec:ep}
  
We remark that for irrational $\alpha > 0$, the theory of exponent pairs (see chapter 3 of \cite{Gr} or section 8.4 in \cite{Iw}) applies. An exponent pair $(k,\ell)$ as defined in chapter 3 in \cite{Gr}, would give rise to the bound 
$$\|H_{N,\omega}^{\pm} \|_{L_x^{\infty}} = \Big\| \sum_{N\leq n <2  N} e^{itn^{\alpha} \pm ixn} \Big\|_{ L_x^{\infty}}\lesssim  N^{k\alpha + \ell - k }.$$  
The exponent pair conjecture claims that $(0,1/2 + \epsilon)$ is an exponent pair, which would imply square root cancellation.
 This is a problem central to number theory, as it would imply the Lindel\"of hypothesis, the Gauss circle problem, and a host of other famous problems in number theory. Although the conjecture is far from being settled, there are many partial results towards it.

There are two methods, called the $A$ and $B$ processes, which combine to give rise to an infinite family of exponent pairs, see e.g. Chapters 3 and 5 of \cite{Gr}. For example $(\frac19,\frac{13}{18})$ is an exponent pair, see Chapter 7 of \cite{Gr}. In order to understand the Riemann zeta function on the critical line, specialized methods have been developed (see Chapter 7 of \cite{Gr}), culminating in a recent paper of Bourgain \cite{Bo} which yields the  exponent pair $(\frac{13}{84}+,\frac{55}{84}+)$. These bounds are the best known for irrational $\alpha$ around 2.

 Finally, we note that for rational values of $\alpha$ one cannot blindly apply the results on the exponent pair conjecture. However,   one may apply the $A$ and $B$ processes directly to improve the bounds discussed above.   We give an example of such an argument below  when $r= \frac\alpha{\alpha-1}$ is an integer. Unlike the previous result (Theorem~\ref{thm:nalpha}), the algebraic nature of time will be important in what follows.  

In number theory,  it is standard to study 1-periodic functions as opposed to $2\pi$-periodic functions that we consider here. To remain consistent with the bounds that we are using from number theory texts that we cite, we scale $t$ and $x$ by $2\pi$. Recall that for $\theta \in \mathbb{R}$, $e(\theta) := e^{2\pi i \theta}$. Our goal is to estimate the $L^\infty$ norm of 
$$
H_N^+ =\sum_{ N \leq n < 2 N }e(t n^{\alpha} + xn).
$$

\begin{theorem}\label{t3/2} Fix $\alpha\in (1,2) $ so that  $r= \frac\alpha{\alpha-1}$ is an integer.
Let $t \in \mathbb{R}$ such that $c_{t,r}:= t^{1-r} (r-1)^{r-1} r^{-r} $ is Khinchin--L{\'e}vy. Then $$\sup_{x \in \T} | \sum_{n \sim  N} e(tn^{\alpha} + x n) | \lesssim_{\alpha , t}  N^{ \frac{ \alpha}{2}+ }  N^{- (\alpha-1) 2^{1-r}}.$$ In particular the bound holds for a.e.~$t$. 
\end{theorem}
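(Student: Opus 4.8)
The plan is to use the $B$ process (Poisson summation / the van der Corput lemma on exponential sums) to convert the sum $H_N^+ = \sum_{n \sim N} e(t n^\alpha + xn)$ into a dual sum whose phase is essentially a power $c_{t,r}\, m^r$ with $r = \frac{\alpha}{\alpha-1}$ an integer, and then to apply Weyl's inequality to that dual sum. First I would recall the standard $B$-process transformation: for a smooth phase $\psi$ with $\psi''$ of constant sign and size $\sim \Lambda$ on $[N, 2N]$, one has
\[
\sum_{n \sim N} e(\psi(n)) = \sum_{\nu} \frac{e(\psi(n_\nu) - \nu n_\nu + \tfrac18)}{\sqrt{|\psi''(n_\nu)|}} + \text{error},
\]
where $n_\nu$ solves $\psi'(n_\nu) = \nu$ and $\nu$ ranges over the image of $\psi'$ on $[N,2N]$. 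Here $\psi(n) = t n^\alpha + xn$, so $\psi'(n) = \alpha t n^{\alpha-1} + x$ and the stationary point is $n_\nu = \left(\frac{\nu - x}{\alpha t}\right)^{1/(\alpha-1)}$; substituting back, the phase becomes $\psi(n_\nu) - \nu n_\nu = c\, t^{1/(1-\alpha)} (\nu - x)^{\alpha/(\alpha-1)} = $ (constant) $\times c_{t,r}^{-1}\,(\nu-x)^r$ up to bookkeeping of the explicit constant, which is exactly where the quantity $c_{t,r} = t^{1-r}(r-1)^{r-1} r^{-r}$ comes from (one simply tracks $\frac{\alpha}{\alpha-1} = r$, $\frac{1}{\alpha-1} = r-1$). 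The amplitude $|\psi''(n_\nu)|^{-1/2} \sim (|t| N^{\alpha-2})^{-1/2}$ is essentially constant over the $\sim |t| N^{\alpha-1}$ values of $\nu$, so up to a harmless $N^{0+}$ and the amplitude factor we are reduced to bounding $\sup_x |\sum_{\nu \sim M} e(c_{t,r} \nu^r + (\text{linear in }\nu))|$ with $M \sim N^{\alpha-1}$.

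Next I would apply Weyl's inequality (the same Lemma~2.4 of \cite{Vau} used in Proposition~\ref{prop:diag_infty}) to this dual sum of length $M \sim N^{\alpha-1}$ with leading coefficient $c_{t,r}$ of degree $r$: since $c_{t,r}$ is Khinchin--L\'evy, for any $\epsilon > 0$ there is a denominator $q \in [M, M^{1+\epsilon}]$ with $|c_{t,r} - a/q| \le q^{-2}$, $(a,q)=1$, and Weyl gives
\[
\Big|\sum_{\nu \sim M} e(c_{t,r}\nu^r + \cdots)\Big| \lesssim_\epsilon M^{1+\epsilon}\big(q^{-1} + M^{-1} + qM^{-r}\big)^{2^{1-r}} \lesssim_\epsilon M^{1 - 2^{1-r} + \epsilon}.
\]
Multiplying by the amplitude $(|t| N^{\alpha-2})^{-1/2} \sim_t N^{(2-\alpha)/2}$ and recalling $M \sim N^{\alpha-1}$ yields $\sup_x |H_N^+| \lesssim_{\alpha,t} N^{(2-\alpha)/2} \cdot N^{(\alpha-1)(1 - 2^{1-r}) + \epsilon} = N^{\alpha/2 + \epsilon} N^{-(\alpha-1)2^{1-r}}$, which is precisely the claimed bound. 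The fact that a.e.\ $t$ works then follows because $t \mapsto c_{t,r}$ is a nonconstant real-analytic (in fact monomial-type) map, so the preimage of the measure-zero non-Khinchin--L\'evy set is again null.

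The main obstacle I anticipate is technical rather than conceptual: making the $B$-process step rigorous and uniform in $x$. One must verify that the hypotheses of the van der Corput exponential-sum lemma (e.g.\ Lemma in Chapter~3 of \cite{Gr} or Theorem~8.16 of \cite{Iw}) hold uniformly for all $x \in \mathbb{T}$ — the shift $x$ only translates the dual variable $\nu$ and does not affect $\psi''$, so this is fine, but the error term in the transformation (typically $O(\log N)$ plus endpoint contributions of size $\sim |\psi''|^{-1/2}$ or bounded by the derivative data) must be checked to be absorbable into $N^{\alpha/2 + \epsilon}N^{-(\alpha-1)2^{1-r}}$; since $|\psi''|^{-1/2} \sim_t N^{(2-\alpha)/2} \le N^{\alpha/2}$ for $\alpha \ge 1$, this is comfortably true. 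A secondary point to watch is that $r$ being an integer $\ge 2$ is exactly what licenses the use of Weyl's inequality on the dual sum (for non-integer $r$ one would instead invoke the exponent-pair machinery of Section~\ref{sec:ep}); the hypothesis $\alpha \in (1,2)$ with $\frac{\alpha}{\alpha-1} \in \mathbb{Z}$ forces $r \ge 3$, i.e.\ $\alpha \in \{3/2, 4/3, 5/4, \dots\}$, and for $r = 3$ ($\alpha = 3/2$) one recovers the exponent $\tfrac{11}{8}$ lower regularity matching Remark~\ref{rmk:32}.
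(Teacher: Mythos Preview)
Your proposal is correct and follows essentially the same route as the paper: apply the $B$-process (Theorem~8.16 of \cite{Iw}) to convert $H_N^+$ into a dual sum of length $M\sim N^{\alpha-1}$ with phase $c_{t,r}(m-x)^r$, then invoke Weyl's inequality (Lemma~2.4 of \cite{Vau}) under the Khinchin--L\'evy hypothesis on $c_{t,r}$. The only cosmetic difference is that the paper removes the amplitude $(m-x)^{r/2-1}\sim N^{(2-\alpha)/2}$ via summation by parts \eqref{sumbyparts} rather than declaring it ``essentially constant''; your choice $q\in[M,M^{1+\epsilon}]$ in the Weyl step is in fact the one that yields the stated exponent $-(\alpha-1)2^{1-r}$.
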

\begin{proof} We first  use the B-process of Van der Corput, that is Theorem 8.16 in \cite{Iw} which we state for convenience. 
\begin{theorem}[Theorem 8.16 in \cite{Iw}] Let $f(x)$ be a real function on $[a,b]$ whose derivatives satisfy the following conditions: $\Lambda \leq f'' \leq \eta \Lambda$, $|f^{(3)}| \leq \eta \Lambda (b-a)^{-1}$, $|f^{(4)}| \leq \eta \Lambda (b-a)^{-2}$ for some $\Lambda > 0$ and $\eta \geq 1$. Then we have $$\sum_{a < n < b} e(f(n)) = \sum_{f'(a) < m < f'(b)} e(f(x_m) - mx_m + 1/8) f''(x_m)^{-1/2} + R_f(a,b), $$ where $x_m$ is the unique solution to $f'(x) = m$. The  error term  satisfies 
$$R_f(a,b) \lesssim \Lambda^{-1/2} + \eta^2 \log (f'(b) - f'(a) + 1),$$ 
where the implied constant is absolute.
\end{theorem}

Thus the original exponential sum is asymptotically equal to its ``dual" exponential sum, which arises from an application of Poisson summation formula and stationary phase estimates.  In our case, $f(u) = tu^{\alpha} + x u$, $a=   N  -1$, $b = 2 N $, $x_m = \left( \frac{m-x}{t \alpha} \right)^{\frac1{\alpha-1}}$, and using $\alpha = \frac{r}{ r-1}$ this simplifies to
\begin{align}\label{maineq1}\begin{split}  H_N^+(t,x)  
&= C \sum_{f'( N) \leq m \leq f'(2 N )} (m-x)^{\frac{2 - \alpha}{2\alpha - 2}}    e\left(t x_m^{\alpha} + (x-m) x_m \right) + O( N^{ 1 - \frac\alpha2 }) 
\\
&=  C \sum_{f'( N) \leq m \leq f'(2 N )} (m-x)^{\frac{r}2-1}     e\left(c_{t,r} (m-x)^{r}  \right) + O(N^{\frac12}),\end{split} \end{align}
where  $c_{t,r}= t^{1-r} (r-1)^{r-1} r^{-r} $ and $C=C(\alpha,t) $ is a constant. Since the error term is $O(N^{\frac12})$, we concentrate on the main term above. Our assumption on $\alpha$  ensures that the amplitude function is a polynomial. 

Using summation by parts \eqref{sumbyparts} 
and noting that $f'( N) \sim  f'(2 N ) \sim  N^{\alpha-1 }=N^{\frac{1}{r-1}}$, we bound the main term by 
\be\label{ctrsum}
  N^{ (\alpha-1)(\frac{r}2-1) } \sup_{f'( N) \leq u \leq f'(2 N )}  \Big| \sum_{f^\prime( N) \leq m \leq u} e\left(c_{t,r} (m-x)^{r}  \right) \Big|.
\ee
Now we are in a position to apply\footnote{One could also apply \eqref{vinmvt} to obtain better bounds for $r \geq 7$, but this inequality is worse for $r=3$ which we consider the most interesting case.} Weyl's inequality (for instance, Lemma 2.4 in \cite{Vau}) to   bound  the quantity above by
$$
  N^{(\alpha-1)(\frac{r}2-1) } N^{  \alpha-1+ } \big(q^{-1}+ N^{-  \alpha+1 }+ q  N^{-r (\alpha-1) }  \big)^{2^{1-r}},
$$
where $q \in \mathbb{N}$ is defined by the rational approximation $|c_{t,r} - \frac{a}q| \leq \frac1{q^2}$, with $(a,q)=1$. By assumption, $c_{t,r}$ is Khinchin--L{\'e}vy, and we may choose $q \in [N^{\frac{  \alpha - 1 }{2}} ,  N^{\frac{  \alpha - 1 }{2}+}]$ to obtain
$$
|H_N^+(t,x)| \lesssim N^{ \frac{ \alpha}{2}+ } N^{- (\alpha-1) 2^{1-r}}.
$$
This finishes the proof of Theorem~\ref{t3/2}.
\end{proof}

\begin{rmk}\label{rmk:32}
For the sake of discussion, we specialize further to the case $\alpha = \frac32$ (so $r = 3$). Then the above bound is $N^{\frac58}$. Note that this improves upon the bound \eqref{expsum1}  by a factor of $N^{\frac18}$ and gives the dimension estimate $\frac{11}8\leq D_t(\omega,g)\leq \frac{13}8$ improving the range  given in Theorem~\ref{thm:nalpha} for a.e.~$t$ characterized by Khinchin--L\'evy numbers.  If one was able to prove square root cancellation in cubic Weyl sums, the argument above would have implied square root cancellation in Theorem \ref{t3/2} leading to $D_t(\omega,g)=\frac32$.

We also remark that for $c_{t,3} = \frac{a }q \in \mathbb{Q}$, the supremum in \eqref{ctrsum} is at least (taking $x=0$)
$$\Big|\sum_{ m \sim  N^{\frac12 } } e \big( \frac{a m^3}{q}\big)\Big| \approx \frac{N^{\frac12}}{q}\Big| \sum_{j=1}^q e\big(\frac{ aj^3}{q} \big)\Big|.$$ 
It follows from the above considerations that the best bound we can obtain for the $L^\infty$ norm of $H_N$ is $N^{\frac34}$. This shows that for rational $t$, one cannot use the argument above   to improve upon the fractal dimension upper bound of $\frac74$. This is interesting, because just by looking at the original sum $\sum_{n \sim N} e(t n^{\frac32} + nx)$, there does not seem to be an obvious distinction between rational and Khinchin--L{\'e}vy $t$. It is not clear to us whether the fractal dimension should depend on the algebraic nature of $t$ or if this is simply an outcome of the techniques we used.
\end{rmk}

\end{document}